 \definecolor{cupgreen}{rgb}{0,0.498,0.208}
  \definecolor{cupblue}{rgb}{0,0,.5}
  \definecolor{cupred}{rgb}{1,0.04,0}
  \definecolor{cuppink}{rgb}{0.925,0,0.545}
  \definecolor{cupmagenta}{rgb}{0.624,0.161,0.424}
  \definecolor{cupbrown}{rgb}{0.71,0.212,0.133}
  \definecolor{cupgreen}{rgb}{0,0,0}
  \definecolor{cupblue}{rgb}{0,0,0}
  \definecolor{cupred}{rgb}{0,0,0}
  \definecolor{cuppink}{rgb}{0,0,0}
  \definecolor{cupmagenta}{rgb}{0,0,0}
  \definecolor{cupbrown}{rgb}{0,0,0}
\definecolor{TITLE}{rgb}{0,0,0}
\definecolor{midblue}{rgb}{0.00,0.0,0.80}
\definecolor{darkblue}{rgb}{0.00,0.00,0.45}
\definecolor{SECTION}{rgb}{0.50,0.00,1.00}
\definecolor{THM}{rgb}{0.8,0,0.1}
\definecolor{SEC}{rgb}{0,0,1}
\newcommand{\aut}{\mathrm{Aut}}
\newtheorem{theorem}{{\color{THM} Theorem}}[section]
\DeclareRobustCommand{\stirling}{\genfrac\{\}{0pt}{}}
\newtheorem{lemma}[theorem]{{\color{THM}Lemma}}
\newtheorem{proposition}[theorem]{{\color{THM}Proposition}}
\newtheorem{corollary}[theorem]{{\color{THM}Corollary}}
\theoremstyle{definition}
\numberwithin{equation}{section}
\title{Number of Distinguishing Colorings and Partitions}
\author{Bahman Ahmadi, Fatemeh Alinaghipour\\
\small and \\
Mohammad Hadi Shekarriz\footnote{Corresponding author}\\
\small Department of Mathematics, Shiraz University, 71454, Shiraz, Iran.\\
\small\tt bahman.ahmadi, fatemeh.naghipour, mshekarriz@shirazu.ac.ir\\
\date {}}
\begin{document}

\maketitle

\begin{abstract}
A vertex coloring of a graph $G$ is called distinguishing (or symmetry breaking) if no non-identity automorphism of $G$ preserves it, and the distinguishing number, shown by $D(G)$, is the smallest number of colors required for such a coloring. This paper is about counting non-equivalent distinguishing colorings of graphs with $k$ colors. A parameter, namely $\Phi_k (G)$, which is the number of non-equivalent distinguishing colorings of a graph $G$ with at most $k$ colors, is shown here to have an application in calculating the distinguishing number of the lexicographic product and the $X$-join of graphs. We study this index (and some other similar indices) which is generally difficult to calculate. Then, we show that if one knows the distinguishing threshold of a graph $G$, which is the smallest number of colors $\theta(G)$ so that, for $k\geq \theta(G)$, every $k$-coloring of $G$ is distinguishing, then, in some special cases, counting the number of distinguishing colorings with $k$ colors is very easy. We calculate $\theta(G)$ for some classes of graphs including the Kneser graph $K(n,2)$. We then turn to vertex partitioning by studying the distinguishing coloring partition of a graph $G$; a partition of vertices of $G$ which induces a distinguishing coloring for $G$. There, we introduce $\Psi_k (G)$ as the number of non-equivalent distinguishing coloring partitions with at most $k$ cells, which is a generalization to its distinguishing coloring counterpart. 
\end{abstract}

\textbf{Keywords:} {distinguishing coloring, distinguishing threshold, distinguishing partition, distinguishing coloring partition}

\bigskip

\textbf{Mathematics Subject Classification}: 05C15, 05C25, 05C30

\section{Introduction}

Breaking symmetries of graphs via vertex coloring is a subject initiated by Babai's work \cite{ba-1977} in 1977. There, he introduced an \emph{asymmetric coloring of a graph}, and proved that a tree has an asymmetric coloring with two colors if all vertices have the same degree $\alpha \geq 2$, where $\alpha$ can be an arbitrary finite or infinite cardinal. The concept was later called \emph{distinguishing coloring} in the literature, since the appearance of \cite{ac} by Albertson and Collins in 1996.

This paper is about counting \emph{non-equivalent} distinguishing colorings and partitions of a given graph with $k$ colors. Considering this number in case of 2 colors is as old as symmetry breaking in graphs; in 1977 Babai \cite{ba-1977} tried to count distinguishing colorings of infinite trees, while in 1991 Polat and Sabidussi \cite{Polat} tried to count \emph{essentially different asymmetrising sets} in finite and infinite trees which are \emph{distinguishing (coloring) partitions with $2$ cells} in our terminology (see Section \ref{dist-part}).

A vertex coloring of a graph $G$ is called \emph{distinguishing} if it is only preserved by the identity automorphism; in this case, we say that the coloring \emph{breaks} all the symmetries of $G$. By a $k$-\textit{distinguishing coloring} we mean a distinguishing coloring which uses exactly $k$ colors. The {\it distinguishing number} of a graph $G$, denoted $D(G)$, is the smallest number $d$ such that there exists a distinguishing vertex coloring of $G$ with $d$ colors. A graph $G$ is called {\it $d$-distinguishable} if there exists a distinguishing vertex coloring with $d$ colors \cite{ac}. The distinguishing number of some important classes of graphs are as follows: $D(K_{n})=n$, $D(K_{n,n})=n+1$, $D(P_n) = 2$ for $n\geq 2$, $D(C_3)=D(C_4)=D(C_5)=3$ while $D(C_n)=2$ for $n\geq 6$ \cite{ac}.

A whole wealth of results on the subject has already been generated. Among many, we can only mention a few, only those that have essentially important results, or those that have introduced new indices based on distinguishing colorings. For a connected finite graph $G$, it was independently proved by Collins and Trenk \cite{Collins} and Klav\v{z}ar et al. \cite{klavzar} that $D(G) \leq \Delta + 1$, where $\Delta$ is the largest degree of $G$. Equality holds if and only if $G$ is a complete graph $K_{\Delta+1}$, a balanced complete bipartite graph $K_{\Delta,\Delta}$, or $C_5$. Collins and Trenk \cite{Collins} also mixed the concept of distinguishing colorings with proper vertex colorings to introduce the \emph{distinguishing chromatic number $\chi_{D}(G)$} of a graph $G$. It is defined as the minimum number of colors required to properly color the vertices of $G$ such that this coloring is only preserved by the trivial automorphism. They also showed that, for a finite connected graph $G$, we have $\chi_D (G) \leq 2\Delta(G)$ and that equality holds only if $G$ is isomorphic to $K_{\Delta,\Delta}$ or $C_6$. 

Symmetry breaking can also happen via other kinds of graph colorings. An analogous index for an edge coloring, namely the \emph{distinguishing index} $D'(G)$, has been introduced by Kalinowski and Pil\'sniak in \cite{Kalinowski} as the minimum number of the required colors in an asymmetric edge-coloring of a connected graph $G\not\simeq K_2$. Moreover, they showed that $D'(G) \leq \Delta(G)$ for a finite connected graph $G$, unless $G$ is isomorphic to $C_3$, $C_4$ or $C_5$. Another analogous index is introduced by Kalinowski, Pil\'sniak and Wo\'zniak in \cite{Rafal}; the \emph{total distinguishing number} $D''(G)$ is the minimum number of   required colors in an asymmetric total coloring of $G$.

To generalize some results from the finite case to the infinite ones, Imrich, Klav\v{z}ar and Trofimov \cite{Wilfried} considered the distinguishing number for infinite graphs. They showed that for an infinite connected graph $G$ we have $D(G) \leq \mathfrak{n}$, where $\mathfrak{n}$ is a cardinal number such that the degree of any vertex of $G$ is not greater than $\mathfrak{n}$.  Most symmetry breaking concepts have their relative counterparts in the infinite case, however, there are some (such as the \emph{Infinite Motion Conjecture}) that arise only when we consider infinite graphs. As an instance, one can take a look at \cite{comparison} by Imrich et al. which contains comparisons of some distinguishing indices of connected infinite graphs.

It was also interesting to know the distinguishing number for product graphs. For example, Bogstad and Cowen \cite{bc} showed that for $k \geq 4$, every hypercube $Q_k$ of dimension $k$, which is the Cartesian product of $k$ copies of $K_2$, is $2$-distinguishable. It has also been shown by Imrich and Klav{\v{z}}ar in \cite{ik} that the distinguishing number of Cartesian powers of a connected graph $G$ is equal to two except for $K_2^2, K_3^2, K_2^3$. Meanwhile, Imrich, Jerebic and Klav\v{z}ar \cite{ijk} showed that Cartesian products of relatively prime graphs whose sizes are close to each other can be distinguished with a small number of colors. Moreover, Estaji et al. in \cite{eikpt} proved that for every pair of connected graphs $G$ and $H$ with $|H| \leq |G| < 2^{|H|} - |H|$, we have $D(G \Box H) \leq 2$. Gorzkowska, Kalinowski and Pil\'{s}niak proved a similar result for the distinguishing index of the Cartesian product \cite{gkp}.

The lexicographic product was a subject of symmetry breaking via vertex and edge coloring by Alikhani and Soltani in \cite{saeed}, where they showed that under some conditions on the automorphism group of a graph $G$, we have $D(G)\leq D(G^k)\leq D(G)+k-1$, where $G^k$ is the $k$th lexicographic power of $G$, for any natural number $k$. As well, they showed that if $G$ and $H$ are connected graphs, then  $D(H)\leq D(G\circ H)\leq |V(G)|\cdot D(H)$.

Coloring is not the only mean of symmetry breaking in graphs. For example, one might break the symmetries of a graph  via a more general tool such as vertex partitioning. Ellingham and Schroeder introduced \emph{distinguishing partition} of a graph as a partition of the vertex set that is preserved by no nontrivial automorphism \cite{Ell}. Here, unlike coloring, some graphs have no distinguishing partition. Anyhow, for a graph $G$ that admits a distinguishing partition, one may think of the  minimum number of required cells in a distinguishing partition of the vertex set. Here, we show this index  by $DP(G)$.

In this paper, we  introduce some further  indices related  to symmetry breaking of graphs by studying the number of non-equivalent distinguishing colorings of a graph with $k$ colors and some other similar quantities. This is motivated by  the problem of evaluating  the distinguishing number of a lexicographic product or the $X$-join of some graphs, which we consider in Section~\ref{lexico}.

The paper is organized as follows. In Section~\ref{dist-col}, we consider  the number of non-equivalent distinguishing colorings of a graph $G$ with (exactly) $k$ colors, namely $\Phi_k (G)$ (and $\varphi_k (G)$) and, we calculate these indices for some simple types of graphs. Afterwards in Section \ref{threshold}, we introduce the \emph{distinguishing threshold} as a dual index to the distinguishing number. It is shown that calculations of some indices introduced in Sections \ref{dist-col} and \ref{dist-part} are easier, in some cases, when we know the distinguishing threshold. Moreover, in Section~\ref{dist-part}, we consider the number of non-equivalent \emph{distinguishing coloring partitions} of a graph $G$ with (exactly) $k$ cells, namely  $\Psi_k (G)$ (and $\psi_k (G)$) and, we calculate them in some special cases. Additionally in Section~\ref{dist-part}, some other auxiliary indices are also introduced. Then, we present an application of one of the indices introduced here, namely $\Phi_k (G)$, in Section~\ref{lexico}. We finally conclude  the  paper by shedding some lights on the future investigations  in Section~\ref{conclusion}.

Here, we use the standard notation and terminology of graph theory, which can  be found in~\cite{Diestel}. We only remind that the set of neighbors of a vertex $v$ in G is denoted by $N(v)$, while $N[v]$ stands for the set $N(v)\cup \{ v \}$.

\section{Non-equivalent Distinguishing Colorings}\label{dist-col}

Two colorings $c_1$ and $c_2$ of a graph $G$ are called \emph{equivalent} if there is an automorphism $\alpha$ of $G$ such that $c_1 (v) = c_2 (\alpha(v) )$ for all $v\in V (G)$.

The number of non-equivalent distinguishing colorings of a graph $G$ with $\{1,\ldots,k\}$ as the set of admissible   colors is shown by $\Phi_k (G)$, while the number of non-equivalent $k$-distinguishing colorings of a graph $G$ with $\{1,\ldots,k\}$ as the set of colors is shown by $\varphi_k (G)$. When $G$ has no distinguishing colorings with exactly $k$ colors, we put $\varphi_k (G) =0$. It is also clear that $\Phi_{D(G)} (G) = \varphi_{D(G)} (G)$. Moreover, it is straightforward to show that $$\Phi_k (G)= \sum_{i=D(G)}^{k} \binom{k}{i}\varphi_i (G).$$
Note also that $\varphi_k (K_n )$ is nonzero only when $k=n$, for which we know $\varphi_n (K_n)=1$. It is easy to prove that for $n\geq 2$ and $k\geq n$,
 $$\Phi_k (K_{n})=\binom{k}{n}.$$

In the following two theorems, we give some recursive formulas for $\Phi_k (P_n)$ and $\varphi_k (P_n)$. An interested reader can find some explicit formulas for these indices (and $\Phi_k (C_n)$ and $\varphi_k (C_n)$ as well), in the \href{https://oeis.org}{Online Encyclopedia of Integer Sequences} under the relevant sequence number (see \nameref{App_A}).

\begin{theorem} \label{Phi_P_n}
	For $n=4, 5, \ldots$, we have $$\Phi_k (P_n) = \binom{k}{2} k^{n-2}+k\Phi_k (P_{n-2}),$$ while $\Phi_k (P_{2}) = \binom{k}{2}$ and $\Phi_k (P_{3})= k\binom{k}{2}$.
\end{theorem}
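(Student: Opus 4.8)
The plan is to count non-equivalent distinguishing colorings of $P_n$ by exploiting the fact that $\Aut(P_n)$ is the two-element group generated by the reflection $\sigma$ that swaps vertex $i$ with vertex $n+1-i$. A coloring $c$ of $P_n$ with colors from $\{1,\dots,k\}$ is distinguishing precisely when it is not fixed by $\sigma$, i.e. when $c$ is not a palindrome. Two colorings are equivalent iff one is the reflection of the other, so each equivalence class of distinguishing colorings has size exactly $2$. Hence $\Phi_k(P_n) = \tfrac12\bigl(k^n - p_n(k)\bigr)$, where $p_n(k)$ is the number of palindromic $k$-colorings of $P_n$: these are determined freely by the first $\lceil n/2\rceil$ vertices, so $p_n(k) = k^{\lceil n/2\rceil}$. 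This already yields a closed form, but to get the stated recursion I would instead argue combinatorially so that the induction on $n \mapsto n-2$ falls out naturally.

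**The recursive step.**

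First I would verify the base cases directly: $P_2$ has automorphism group of order $2$, its distinguishing colorings are exactly the $k^2-k$ colorings with the two vertices differently colored, and these pair up under $\sigma$, giving $\Phi_k(P_2) = \binom{k}{2}$. For $P_3$, a coloring is distinguishing iff the two endpoints get different colors (the middle vertex is free), so there are $k(k^2-k)/2 \cdot 1 \cdot$ — more carefully, $(k^2-k)\cdot k$ colorings with distinct endpoints, paired by $\sigma$, hence $\Phi_k(P_3) = k\binom{k}{2}$. For the inductive step with $n \ge 4$, I would split the distinguishing colorings of $P_n$ according to the pair $(c(1), c(n))$ of endpoint colors. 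If $c(1) \ne c(n)$, the coloring is automatically asymmetric no matter how the internal path $P_{n-2}$ on vertices $2,\dots,n-1$ is colored; there are $\binom{k}{2}$ unordered choices for $\{c(1),c(n)\}$ after quotienting by $\sigma$ (which swaps the two ends), and $k^{n-2}$ free colorings of the middle, contributing $\binom{k}{2}k^{n-2}$. If $c(1) = c(n)$, say both equal some color (one of $k$ choices, and $\sigma$ fixes this choice of the outer pair), then the whole coloring is distinguishing iff the induced coloring on the inner $P_{n-2}$ is distinguishing as a coloring of $P_{n-2}$ — because the reflection of $P_n$ restricts exactly to the reflection of that inner path, and the equal endpoints contribute nothing to breaking or creating symmetry. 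Counting equivalence classes then gives $k\cdot\Phi_k(P_{n-2})$, and summing the two cases yields the claimed formula.

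**The main obstacle and how to handle it.**

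The delicate point is the bookkeeping of equivalence classes in the two cases: I must make sure that the action of $\sigma$ on the set of all relevant colorings descends correctly to a free action (so that dividing by $2$ is legitimate) and that the "outer data" $(c(1),c(n))$ and the "inner coloring" transform compatibly under $\sigma$. Concretely, $\sigma$ acts on a coloring of $P_n$ by simultaneously swapping the outer pair and reflecting the inner path; so in the $c(1)\ne c(n)$ case the orbits of outer pairs have size $2$ while the inner coloring can be anything, and in the $c(1)=c(n)$ case $\sigma$ fixes the outer color and acts on the inner colorings exactly as the reflection of $P_{n-2}$, whose distinguishing colorings again form orbits of size $2$. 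One must also confirm there is no overcounting between colorings that happen to be palindromic on the inner path but not globally — but when $c(1)\ne c(n)$ such a coloring is still distinguishing on $P_n$ and is correctly counted via the size-$2$ orbit of its endpoint pair, so the case split is genuinely a partition with no overlap. Once these orbit-counting points are pinned down, the rest is a routine assembly of the two contributions.
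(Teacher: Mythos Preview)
Your argument is correct and follows essentially the same route as the paper: both split the distinguishing colorings of $P_n$ according to whether the two end-vertices receive equal or unequal colors, obtaining $\binom{k}{2}k^{n-2}$ in the unequal case and $k\,\Phi_k(P_{n-2})$ in the equal case. Your write-up is in fact more careful than the paper's on the one point that needs justification---namely that the action of the reflection $\sigma$ on colorings restricts compatibly to the inner $P_{n-2}$ and that orbits have size exactly $2$ in each case---so the equivalence-class bookkeeping you flag as the ``main obstacle'' is handled explicitly where the paper leaves it implicit.
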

\begin{proof}
	The proof is clear for $n=2,3$. For $n\geq 4$, we know that the two end-vertices of $P_n$ either have different colors or the same color. If they are different in colors, we can pick the two colors in $\binom{k}{2}$ different ways, and, the internal vertices can have any possible $k^{n-2}$ combinations, because the only non-trivial automorphism of $P_n$ has to map its end vertices onto one another. When the two end-vertices  have the same color, we can choose their color in $k$ different ways, but to be sure that the coloring is distinguishing, the remaining path of length $n-2$ must be distinguishingly colored in $\Phi_k (P_{n-2})$ ways.  
\end{proof}

In the next result, we make use of the well-known fact that  the number of surjective functions from a set of $n$ elements to a set of $k$ elements is $k!\stirling{n}{k}$ where $\stirling{n}{k}$ is the Stirling number of the second type.
\begin{theorem}\label{varphi_P_n}
	For $n=4, 5, \ldots$, we have $$\varphi_k (P_n) = k\big(\varphi_k (P_{n-2}) + \varphi_{k-1} (P_{n-2})\big)+ \binom{k}{2} \Big( (k-2)! \stirling{n-2}{k-2} +2(k-1)! \stirling{n-2}{k-1}+k!\stirling{n-2}{k} \Big).$$
\end{theorem}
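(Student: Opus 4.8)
The plan is to imitate the case analysis in the proof of Theorem~\ref{Phi_P_n}, splitting the $k$-distinguishing colorings of $P_n$ according to whether the two end vertices receive equal or distinct colors, but now keeping careful track of the total number of colors used. Throughout I would use that $\Aut(P_n)=\{\id,\sigma\}$, where $\sigma$ is the reversal $v_i\mapsto v_{n+1-i}$, so a coloring of $P_n$ is distinguishing exactly when it is not fixed by $\sigma$ (i.e.\ not a palindrome); in particular every distinguishing coloring has trivial stabilizer, so its equivalence class has size exactly $2$. I would also use the stated fact that the number of colorings of an $n$-vertex path using all of $\{1,\dots,k\}$ is the number of surjections $[n]\to[k]$, namely $k!\stirling{n}{k}$.

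\emph{Equal end colors.} The reversal of $P_n$ restricts on the internal vertices to the reversal of the sub-path $P_{n-2}$, so such a coloring is distinguishing iff the induced coloring of that internal $P_{n-2}$ is distinguishing. Fixing the common end color $a\in\{1,\dots,k\}$, an equivalence class of such colorings of $P_n$ corresponds to a choice of $a$ together with an equivalence class of a distinguishing coloring of $P_{n-2}$ whose palette together with $a$ exhausts $\{1,\dots,k\}$. Since exactly $k$ colors must be used in total, the internal path uses either all $k$ colors (giving $\varphi_k(P_{n-2})$ classes for each $a$) or exactly the $k-1$ colors of $\{1,\dots,k\}\setminus\{a\}$ (giving $\varphi_{k-1}(P_{n-2})$ classes for each $a$). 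Summing over the $k$ choices of $a$ produces the term $k\big(\varphi_k(P_{n-2})+\varphi_{k-1}(P_{n-2})\big)$.

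\emph{Distinct end colors.} Such a coloring is automatically distinguishing (the reversal swaps two differently colored vertices) and its equivalence class has size $2$, so I would count labeled colorings and divide by $2$. Writing $a\neq b$ for the two end colors, I split according to how many of $a,b$ also occur among the internal vertices: if the internal $P_{n-2}$ already uses all of $\{1,\dots,k\}$ the labeled count is $k(k-1)\cdot k!\stirling{n-2}{k}$; if it uses exactly $k-1$ colors, missing precisely one of $a,b$, it is $2k(k-1)\cdot(k-1)!\stirling{n-2}{k-1}$; and if it uses exactly $k-2$ colors, missing both $a$ and $b$, it is $k(k-1)\cdot(k-2)!\stirling{n-2}{k-2}$. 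Dividing by $2$ and writing $k(k-1)/2=\binom{k}{2}$ yields the second summand of the claimed formula, and adding the two cases gives the result.

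The routine part is the surjection bookkeeping. The only point needing care is checking that the split in the distinct-ends case is exhaustive and mutually exclusive: since $\operatorname{im}(d)\subseteq\{1,\dots,k\}$, requiring exactly $k$ colors overall forces the internal palette to have size $k$, $k-1$, or $k-2$ according to whether $2$, $1$, or $0$ of the end colors are repeated internally, and in each case the internal palette is then determined (up to which of $a,b$ is omitted in the middle case). The other point to justify is that dividing by $2$ is legitimate, which is exactly the observation that no distinguishing coloring of $P_n$ is $\sigma$-invariant.
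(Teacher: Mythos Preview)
Your proof is correct and follows essentially the same two-case split as the paper's own argument: equal versus distinct end colors, with the internal $P_{n-2}$ carrying the palette constraint in each case. The only cosmetic difference is in the distinct-ends case, where you count ordered assignments of end colors and divide by $2$ (using that each distinguishing coloring has trivial stabilizer), while the paper picks the unordered pair $\{a,b\}$ in $\binom{k}{2}$ ways and implicitly selects one representative per equivalence class; both routes yield the same factor $\binom{k}{2}$ and the same surjection bookkeeping.
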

\begin{proof}
	With the same method of counting to the proof of Theorem \ref{Phi_P_n}, we know that either the colors of the two ends of $P_n$ are the same or they are different. If they are the same, we can pick this color in $k$ different ways. Since the coloring has to be distinguishing, the internal path of length $n-2$ has to be colored distinguishingly with either all $k$ colors or the remaining $k-1$ colors. Therefore in this case we have $ k\big(\varphi_k (P_{n-2}) + \varphi_{k-1} (P_{n-2})\big)$ non-equivalent distinguishing colorings for $P_n$.
	
	When the two end-vertices of $P_n$ have different colors, any arbitrary coloring of internal vertices makes the resulting coloring a distinguishing one.  We can pick the two colors for the end vertices in $\binom{k}{2}$ ways while the rest of colors must be presented on the  internal vertices. Thus, there are four different possibilities; either only the rest of $k-2$ colors are used to color the  internal vertices of $P_n$, or these $k-2$ colors are used along with one of the two colors of the end vertices (two different possibilities), or all the $k$ colors are presented on the internal vertices of $P_n$. In each case we must count the number of surjective functions from the set of available colors to the set of internal vertices. Therefore in this case we have $\binom{k}{2} \Big( (k-2)! \stirling{n-2}{k-2} +2(k-1)! \stirling{n-2}{k-1}+k!\stirling{n-2}{k} \Big)$ non-equivalent distinguishing colorings for $P_n$ with exactly $k$ colors.
\end{proof}

Additionally, we calculate these indices for the complete bipartite graph $K_{n,n}$. Note that $D(K_{n,n})=n+1$.
\begin{theorem}
	For $n=2,3,\ldots$ we have $$\Phi_k (K_{n,n})= \frac{1}{2}\binom{k}{n}\big(\binom{k}{n}-1\big),$$ and for $n+1 \leq k \leq 2n$ we have $$\varphi_k (K_{n,n})= \frac{1}{2}\binom{k}{n}\binom{n}{k-n}.$$
\end{theorem}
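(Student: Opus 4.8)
The plan is to exploit the explicit structure of $\Aut(K_{n,n})$. Write the two parts as $A$ and $B$, each of size $n$. Then $\Aut(K_{n,n})\cong (S_n\times S_n)\rtimes\mathbb{Z}_2$: the subgroup $S_n\times S_n$ consists of the \emph{part-preserving} automorphisms (permuting $A$ and $B$ internally), and the $\mathbb{Z}_2$ is generated by an automorphism interchanging $A$ and $B$. Since $K_{n,n}$ is complete bipartite, a part-swapping automorphism may be specified by an \emph{arbitrary} pair of bijections $\sigma\colon A\to B$ and $\tau\colon B\to A$, so there are $(n!)^2$ of these and $2(n!)^2$ automorphisms in total.

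First I would characterise the distinguishing colorings. If two vertices of $A$ (or of $B$) receive the same color, the transposition swapping them is a nontrivial color-preserving automorphism; hence in a distinguishing coloring each part is rainbow-colored, and $c$ determines an ordered pair $(S_A,S_B)$ of $n$-subsets of $\{1,\dots,k\}$, namely the color sets of $A$ and of $B$. Conversely, given that each part is rainbow, a part-preserving automorphism preserving $c$ must be the identity, while a part-swapping automorphism preserving $c$ exists iff $S_A=S_B$ (in which case it is the unique color-matching pair of bijections). Therefore $c$ is distinguishing precisely when both parts are rainbow-colored and $S_A\neq S_B$.

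Next I would pass to equivalence classes. Two rainbow-on-each-part colorings with the same ordered pair $(S_A,S_B)$ differ by a part-preserving automorphism (one checks the part-preserving automorphisms act transitively on the colorings realising a fixed $(S_A,S_B)$), hence are equivalent; a part-swapping automorphism sends the class of $(S_A,S_B)$ to that of $(S_B,S_A)$; and by the analysis above no automorphism produces any further identification. Thus the equivalence classes of distinguishing colorings with colors from $\{1,\dots,k\}$ correspond bijectively to the \emph{unordered} pairs $\{S_A,S_B\}$ of \emph{distinct} $n$-subsets of $\{1,\dots,k\}$, giving $\Phi_k(K_{n,n})=\binom{\binom{k}{n}}{2}=\tfrac12\binom{k}{n}\big(\binom{k}{n}-1\big)$.

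Finally, for $\varphi_k$ I would impose that all $k$ colors occur, i.e.\ $S_A\cup S_B=\{1,\dots,k\}$; since $|S_A|=|S_B|=n$ this forces $|S_A\cap S_B|=2n-k$, which is $<n$ (hence automatically $S_A\neq S_B$) exactly when $k>n$, and nonnegative exactly when $k\le 2n$, explaining the range $n+1\le k\le 2n$. Counting ordered pairs with union $\{1,\dots,k\}$: pick $S_A$ in $\binom{k}{n}$ ways, then $S_B$ must contain the $k-n$ colors outside $S_A$ and its remaining $2n-k$ elements lie in $S_A$, contributing $\binom{n}{2n-k}=\binom{n}{k-n}$; halving yields $\varphi_k(K_{n,n})=\tfrac12\binom{k}{n}\binom{n}{k-n}$. (As a consistency check one can confirm $\sum_{i=n+1}^{2n}\binom{k}{i}\varphi_i(K_{n,n})=\Phi_k(K_{n,n})$ using $\binom{k}{i}\binom{i}{n}=\binom{k}{n}\binom{k-n}{i-n}$ and Vandermonde's identity.) The only genuinely delicate step is the equivalence-class bijection: one must verify that, within a fixed ordered pair of color sets, the part-preserving automorphisms are transitive on the corresponding colorings, and that a part-swap causes no identification beyond $(S_A,S_B)\leftrightarrow(S_B,S_A)$; the rest is bookkeeping.
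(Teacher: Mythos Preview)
Your argument is correct and follows essentially the same route as the paper's proof: both identify that a distinguishing coloring must be rainbow on each part with distinct color sets $S_A\neq S_B$, and that equivalence classes correspond to unordered pairs $\{S_A,S_B\}$. Your version is more detailed---you make the automorphism group explicit and justify the equivalence-class bijection carefully---whereas the paper's proof is terser and leaves these points implicit.
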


\begin{proof}
The proof is easy for the first assertion since to color $K_{n,n}$ distinguishingly, one should  color the vertices of each part different from other vertices within that part and the two parts of $K_{n,n}$ have to be colored differently. Since we have chosen $n$ colors out of $k$ colors to color the vertices of a part of $K_{n,n}$, we cannot color the other part by the same pallet of colors. Moreover, transposing  the first and the second part makes the two resulting colorings equivalent. Therefore, we arrive at a conclusion for the first assertion.

For the second assertion,  like the first one, we color the vertices of one part by choosing $n$ colors out of $k$ ones, then we  use the remaining $k-n$ colors for the vertices of the other part. For the rest of the vertices, i.e. the $2n-k$ vertices in  the other part, we must choose colors from the first set of $n$ colors in $\binom{n}{2n-k}=\binom{n}{n-2n+k}=\binom{n}{k-n}$ ways. Again, by transposing  the first and the second part, the two resulting colorings are equivalent. This makes the second assertion evident.
\end{proof}

In \nameref{App_A},  we have presented the parameters $\Phi_k$ and $\varphi_k$ for the graphs $P_n$ and $C_n$, for $n,k=2,\ldots,10$.

It might seem easy to calculate $\Phi_k (G)$ and $\varphi_k (G)$ when $G$ is a path or a cycle. However, the calculations are not easy in the general case. Even for a computer algebra system, it might take very long to count the number of non-equivalent distinguishing colorings of a symmetric graph $G$ on $n$ vertices when $n\geq 10$. Anyhow, even when $n$ is large, for some $k$ the calculations are much easier.

Let $G$ be a graph on $n$ vertices. Assume that we desire to distinguishingly color the vertices of $G$ with exactly $n$ colors. Then every vertex must receive a color different from the others which gives rise to $n!$ colorings. However,  in order to count  non-equivalent colorings,   we should consider the colorings modulo the automorphism group  of $G$. Therefore, we have  the following  result, which coincides with Theorem \ref{stirling_general_graph} in the next section.
\begin{proposition}\label{k=n}
	For any graph $G$ on $n$ vertices, we have 
	$$\varphi_n(G)=\frac{n!}{|\aut(G)|}.\qed$$
\end{proposition}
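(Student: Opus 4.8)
The plan is to start from the observation that a coloring of an $n$-vertex graph which uses \emph{exactly} $n$ colors must assign pairwise distinct colors to the $n$ vertices, so such a coloring is nothing but a bijection $c\colon V(G)\to\{1,\dots,n\}$, and there are precisely $n!$ of these. I would then note that every such bijective coloring is automatically distinguishing: if $\alpha\in\aut(G)$ preserves $c$, meaning $c(v)=c(\alpha(v))$ for all $v\in V(G)$, then injectivity of $c$ forces $\alpha(v)=v$ for every $v$, hence $\alpha=\id$. Consequently the set of $n$-distinguishing colorings of $G$ with color set $\{1,\dots,n\}$ is exactly the set $B$ of all $n!$ bijections $V(G)\to\{1,\dots,n\}$.

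Next I would recast the count of non-equivalent colorings as an orbit count. The group $\aut(G)$ acts on $B$ by $(\alpha\cdot c)(v)=c(\alpha^{-1}(v))$, and, directly from the definition of equivalence of colorings, two members $c_1,c_2$ of $B$ are equivalent precisely when they lie in the same $\aut(G)$-orbit (writing $c_1=c_2\circ\alpha$ corresponds to $c_1=\alpha^{-1}\cdot c_2$, and $\alpha$ ranges over all of $\aut(G)$). Thus $\varphi_n(G)$ is the number of orbits of $\aut(G)$ on $B$.

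The crucial point is that this action is \emph{free}: if $\alpha\cdot c=c$ for some $c\in B$, then $c(\alpha^{-1}(v))=c(v)$ for all $v$, and injectivity of $c$ again gives $\alpha^{-1}(v)=v$ for all $v$, i.e. $\alpha=\id$. By the orbit--stabilizer theorem every orbit therefore has cardinality $|\aut(G)|$, and since the orbits partition $B$ we obtain
$$\varphi_n(G)=\frac{|B|}{|\aut(G)|}=\frac{n!}{|\aut(G)|}.$$

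There is essentially no real obstacle in this argument; the only points that require a little care are that "uses exactly $n$ colors" genuinely forces the coloring to be a bijection — so that the same injectivity argument simultaneously yields that every such coloring is distinguishing and that the $\aut(G)$-action on $B$ is free — and that one fixes a consistent left-action convention so that the equivalence classes coincide with the orbits.
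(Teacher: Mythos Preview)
Your argument is correct and follows essentially the same approach as the paper: the paper observes (in the paragraph preceding the proposition) that with exactly $n$ colors every vertex receives a distinct color, yielding $n!$ colorings, and that one must then count these modulo $\aut(G)$. Your version is simply a more rigorous expansion of this sketch, explicitly verifying that each bijective coloring is distinguishing and that the $\aut(G)$-action is free so that every orbit has size $|\aut(G)|$.
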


This result motivates us to ask whether there are  numbers $k\leq n$ such that any $k$-coloring of a graph on $n$ vertices is distinguishing.  We will consider this problem in Section~\ref{threshold}.

\section{Distinguishing Threshold}\label{threshold}

For any graph $G$, we define the \textit{distinguishing threshold} $\theta(G)$ to be the minimum number $t$ such that for any $k\geq t$, any arbitrary coloring of $G$ with $k$ colors is distinguishing.
For example $\theta(K_n)=\theta(\overline{K_n})=n$ and $\theta(K_{m,n})=m+n$. Note, also, that for an asymmetric graph $G$, we have $\theta(G)=D(G)=1$. Moreover, we always have $\theta(G)\geq D(G)$.

Let $G$ be a graph on $n$ vertices. If any two distinct vertices $u,v\in G$ have different set of neighbors other than themselves, then any $(n-1)$-coloring of $G$ has to be distinguishing because in this case, no two vertices with the same color can be mapped to each other by a non-trivial color-preserving automorphism. Conversely, if there are two vertices $u$ and $v$ such that $N(v)\setminus \{u\} = N(u)\setminus \{v\}$, then any $(n-1)$-coloring of $G$ which assigns the same color to these two vertices, is not distinguishing. 
From this, we observe the following.

\begin{lemma}\label{theta_of_G}
	For any graph $G$ on $n$ vertices, $\theta(G)\leq n-1$ if and only if  $N(v)\setminus \{u\} \neq N(u)\setminus \{v\}$, for all distinct vertices $u$ and $v$.\qed
\end{lemma}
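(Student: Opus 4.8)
The plan is to reduce the statement to one elementary fact about transpositions and then to a short counting argument. Throughout I read ``a coloring with $k$ colors'' as a \emph{surjective} map onto a $k$-element palette; this is the only reading under which $\theta(G)$ is finite, since otherwise constant colorings would be non-distinguishing for every $k$. The fact I would isolate first is the following criterion: for distinct $u,v\in V(G)$, the permutation $\tau_{uv}$ that transposes $u$ and $v$ and fixes every other vertex is an automorphism of $G$ if and only if $N(u)\setminus\{v\}=N(v)\setminus\{u\}$. This is immediate from inspecting adjacencies: $\tau_{uv}$ fixes a pair $\{x,y\}$ unless exactly one of $x,y$ lies in $\{u,v\}$, in which case it interchanges $\{u,w\}$ with $\{v,w\}$ for the corresponding $w\notin\{u,v\}$; hence $\tau_{uv}$ preserves adjacency precisely when $u$ and $v$ have the same neighbours inside $V(G)\setminus\{u,v\}$, which, since $u\notin N(u)$ and $v\notin N(v)$, is exactly the stated equality.

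For the implication ``$\Leftarrow$'', assume $N(u)\setminus\{v\}\neq N(v)\setminus\{u\}$ for all distinct $u,v$. A surjective coloring with $n$ colors is a bijection, hence distinguishing, and there is no surjective coloring of $n$ vertices with more than $n$ colors, so it suffices to show that every surjective $(n-1)$-coloring $c$ of $G$ is distinguishing. Exactly one color class of such a $c$ has two elements, say $\{u,v\}$, and every other class is a singleton; hence any automorphism $\alpha$ with $c\circ\alpha=c$ must fix each vertex of unique color, i.e.\ each vertex outside $\{u,v\}$, which forces $\alpha\in\{\id,\tau_{uv}\}$. By the criterion and the hypothesis, $\tau_{uv}\notin\aut(G)$, so $\alpha=\id$ and $c$ is distinguishing; hence $\theta(G)\le n-1$.

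For ``$\Rightarrow$'' I argue contrapositively. If $N(u)\setminus\{v\}=N(v)\setminus\{u\}$ for some distinct $u,v$, then $\tau_{uv}\in\aut(G)$ by the criterion. Colouring $u$ and $v$ with a common colour and the remaining $n-2$ vertices with $n-2$ further pairwise distinct colours yields a surjective $(n-1)$-coloring that is preserved by the non-identity automorphism $\tau_{uv}$, hence is not distinguishing; therefore $\theta(G)>n-1$, so $\theta(G)\le n-1$ fails.

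I do not anticipate a real obstacle: all the content is the transposition criterion together with the structural remark that a surjective $(n-1)$-coloring of an $n$-set has a single repeated colour. The only subtlety worth flagging at the outset — and the only place a reader might hesitate — is the convention on the meaning of ``$k$ colors'', handled above.
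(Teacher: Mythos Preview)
Your proof is correct and follows the same approach as the paper, whose argument is the short paragraph immediately preceding the lemma. You spell out more carefully the transposition criterion and the surjectivity convention (together with the trivial cases $k\ge n$) that the paper leaves implicit, but the core idea---a surjective $(n-1)$-coloring has a unique two-element colour class, so any colour-preserving automorphism is either the identity or that transposition---is identical.
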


For the cases of paths and cycles we can calculate the distinguishing threshold.

\begin{proposition}\label{theta_of_P_n}
For any $n\geq 2 $ we have
	\[ \theta(P_n)=\lceil\frac{n}{2}\rceil+1. \]
\end{proposition}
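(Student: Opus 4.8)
The plan is to reduce everything to the single nontrivial symmetry of $P_n$. Write $V(P_n)=\{v_1,\dots,v_n\}$ with $v_iv_{i+1}\in E(P_n)$, and recall that $\aut(P_n)=\{\id,\sigma\}$, where $\sigma$ is the reversal $\sigma(v_i)=v_{n+1-i}$. Hence a coloring $c$ of $P_n$ fails to be distinguishing precisely when it is fixed by $\sigma$, i.e. when $c(v_i)=c(v_{n+1-i})$ for all $i$; I will call such colorings \emph{palindromic}. So the whole proposition comes down to one question: what is the largest number of colors a palindromic coloring of $P_n$ can use? If the answer is $m$, then every coloring with more than $m$ colors is non-palindromic hence distinguishing, while some coloring with exactly $m$ colors is palindromic hence not distinguishing, so $\theta(P_n)=m+1$. (Here I read ``$k$-coloring'' as ``coloring using exactly $k$ colors'', as otherwise the all-one-color coloring would keep $\theta$ from being finite.)

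For the upper bound, note that a palindromic coloring is determined by its restriction to $v_1,\dots,v_{\lceil n/2\rceil}$: if $i>\lceil n/2\rceil$ then $n+1-i\le\lfloor n/2\rfloor\le\lceil n/2\rceil$, so the color $c(v_i)=c(v_{n+1-i})$ already occurs among the first $\lceil n/2\rceil$ vertices. Therefore a palindromic coloring uses at most $\lceil n/2\rceil$ colors, and since $P_n$ has no coloring using more than $n$ colors, every coloring of $P_n$ with $k\ge\lceil n/2\rceil+1$ colors is distinguishing; that is, $\theta(P_n)\le\lceil n/2\rceil+1$.

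For the lower bound I would just display a palindromic coloring attaining $\lceil n/2\rceil$ colors: assign color $i$ to both $v_i$ and $v_{n+1-i}$ for $i=1,\dots,\lceil n/2\rceil$ (when $n$ is odd this is consistent, as $v_{(n+1)/2}$ receives color $\lceil n/2\rceil$). This coloring is palindromic, hence invariant under the nontrivial automorphism $\sigma$ and so not distinguishing, and it uses exactly $\lceil n/2\rceil$ colors, whence $\lceil n/2\rceil<\theta(P_n)$. Together with the previous paragraph this yields $\theta(P_n)=\lceil n/2\rceil+1$.

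I do not expect a genuine obstacle: the only thing to be careful about is the floor/ceiling bookkeeping (equivalently, the separate status of the fixed middle vertex when $n$ is odd). One could try to shortcut the upper bound via Lemma~\ref{theta_of_G}, but that gives only $\theta(P_n)\le n-1$ for $n\ge 4$, far from $\lceil n/2\rceil+1$, so the direct counting argument above seems to be the efficient route.
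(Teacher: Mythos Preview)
Your proof is correct and follows essentially the same route as the paper's: both exploit that the only nontrivial automorphism is the reversal, so a non-distinguishing coloring must be constant on each of the $\lceil n/2\rceil$ reflection-orbits (your ``palindromic'' condition), hence uses at most $\lceil n/2\rceil$ colors, while an explicit orbit-constant coloring with $\lceil n/2\rceil$ colors witnesses the lower bound. Your write-up is just more explicit about the automorphism group, the meaning of ``$k$-coloring'', and the construction of the extremal example.
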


\begin{proof}
The automorphism group of $P_n$ induces $\lceil \frac{n}{2} \rceil$ orbits on its  vertices, while each orbit contains at most two vertices. By the pigeonhole principle, any combination of $\lceil\frac{n}{2}\rceil+1$ colors on $n$ vertices of $P_n$ breaks at least one orbit. Since with  $\lceil\frac{n}{2}\rceil$ colors, there is a non-distinguishing coloring, we must have $\theta(P_n)=\lceil\frac{n}{2}\rceil+1$.
\end{proof}
\begin{proposition}\label{theta_of_C_n}
	For any $n\geq 3 $ we have
	\[ \theta(C_n)=\lfloor\frac{n}{2}\rfloor+2 \]
\end{proposition}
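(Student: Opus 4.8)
The plan is to reduce the statement to a computation of the maximum number of orbits of a nontrivial cyclic subgroup of $\aut(C_n) = D_n$, the dihedral group of order $2n$. This mirrors the orbit-counting idea behind Proposition~\ref{theta_of_P_n}, but with one wrinkle: since $D_n$ is no longer cyclic, we cannot simply count orbits of the whole automorphism group and must instead keep track of each nontrivial automorphism separately. The key observation is that a coloring $c$ of $C_n$ fails to be distinguishing if and only if some nontrivial $\sigma\in D_n$ preserves $c$, which forces $c$ to be constant on every orbit of $\langle\sigma\rangle$; hence such a $c$ uses at most $m(\sigma)$ colors, where $m(\sigma)$ denotes the number of orbits of $\langle\sigma\rangle$ on $V(C_n)$. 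So the whole proof amounts to determining $M:=\max\{m(\sigma):\sigma\in D_n,\ \sigma\neq\id\}$ and showing $M=\lfloor n/2\rfloor+1$, from which $\theta(C_n)=M+1$.

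For the upper bound $\theta(C_n)\le\lfloor n/2\rfloor+2$, I would bound $m(\sigma)$ for every nontrivial $\sigma$. If $\sigma$ is rotation by $d$ (with vertices identified with $\mathbb{Z}_n$), its orbits are the cosets of the subgroup generated by $d$, so $m(\sigma)=\gcd(d,n)$, a proper divisor of $n$ and therefore at most $\lfloor n/2\rfloor$. If $\sigma$ is a reflection, $\langle\sigma\rangle$ has order $2$, so writing $f$ for the number of fixed vertices we get $m(\sigma)=f+(n-f)/2=(n+f)/2$; for a regular $n$-gon a reflection has $f=1$ when $n$ is odd and $f\in\{0,2\}$ when $n$ is even, so $m(\sigma)\le(n+2)/2=\lfloor n/2\rfloor+1$ in every case. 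Consequently any coloring using at least $\lfloor n/2\rfloor+2$ colors is preserved by no nontrivial automorphism, i.e.\ is distinguishing.

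For the lower bound $\theta(C_n)\ge\lfloor n/2\rfloor+2$, I would exhibit a non-distinguishing coloring using exactly $\lfloor n/2\rfloor+1$ colors: choose a reflection $\sigma_0$ attaining $m(\sigma_0)=\lfloor n/2\rfloor+1$ (a vertex-to-vertex reflection if $n$ is even, a vertex--edge reflection if $n$ is odd) and assign pairwise distinct colors to its $\lfloor n/2\rfloor+1$ orbits. This coloring is $\sigma_0$-invariant, hence not distinguishing, and uses exactly $\lfloor n/2\rfloor+1$ colors, so the threshold is strictly larger than $\lfloor n/2\rfloor+1$. Combining the two bounds gives $\theta(C_n)=\lfloor n/2\rfloor+2$.

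I do not anticipate a real obstacle; the only points requiring care are the parity split when counting fixed vertices of reflections, and the remark that the rotation bound $\gcd(d,n)\le\lfloor n/2\rfloor$ is always dominated by the reflection bound, so that the maximum of $m(\sigma)$ is realized by a reflection. A check against small cases ($\theta(C_3)=\theta(C_4)=4$, consistent with $C_3=K_3$ and $C_4=K_{2,2}$) confirms the formula.
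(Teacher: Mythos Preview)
Your proof is correct, and it takes a genuinely different route for the upper bound than the paper does. Both proofs establish the lower bound the same way---by exhibiting a reflection-invariant coloring with $\lfloor n/2\rfloor+1$ colors---but for the upper bound the paper argues directly: with $k\ge\lfloor n/2\rfloor+2$ colors a pigeonhole count shows at least three vertices receive a color used only once, and then any color-preserving automorphism must fix the unique path between two of them avoiding the third, forcing it to be the identity. Your argument instead proves (implicitly) the general identity $\theta(G)=1+\max_{\sigma\neq\id}m(\sigma)$, where $m(\sigma)$ is the number of orbits of $\langle\sigma\rangle$, and then computes this maximum over $D_n$ by splitting into rotations ($m(\sigma)=\gcd(d,n)\le\lfloor n/2\rfloor$) and reflections ($m(\sigma)=(n+f)/2\le\lfloor n/2\rfloor+1$). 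This is more systematic and immediately generalizes to any graph whose automorphism group one understands well; the paper's argument is more ad~hoc but avoids the need to enumerate conjugacy types in $D_n$. Your parity bookkeeping and the observation that the rotation bound is always dominated by the reflection bound are both handled correctly.
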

\begin{proof}
To show that $\theta(C_{n})\geq \lfloor\frac{n}{2}\rfloor+2$, we present an $(\lfloor\frac{n}{2}\rfloor+1)$-coloring of an $n$-cycle with the vertex set $\{ v_{1}, \ldots, v_n \}$ which is not a distinguishing coloring. If $n$ is even, then color the  vertices $v_{1},\ldots , v_{\lfloor\frac{n}{2}\rfloor+1}$ by colors $1,\ldots , \lfloor\frac{n}{2}\rfloor+1$, and color the vertices $v_{\lfloor\frac{n}{2}\rfloor+2}, \ldots , n$ by $\lfloor\frac{n}{2}\rfloor, \ldots , 2$, respectively. Similarly, if $n$ is odd, color the vertices $v_{1},\ldots , v_{\lfloor\frac{n}{2}\rfloor+1}$ by colors $1,\ldots , \lfloor\frac{n}{2}\rfloor+1$, and color $v_{\lfloor\frac{n}{2}\rfloor+2}, \ldots , n$ by $\lfloor\frac{n}{2}\rfloor+1, \ldots , 2$, respectively. This coloring is not  distinguishing  as it cannot break the reflection symmetry that fixes $v_1$ and maps $v_2$ on $v_n$. Figure \ref{c_n} illustrates this coloring for $C_8$ and $C_9$.

It remains to show that for $k\geq \lfloor\frac{n}{2}\rfloor + 2$, every $k$-coloring of $C_n$ is distinguishing. When $n$ is odd, coloring the vertices of $C_n$ with $k$ colors results in  at least three colors that are used only once. Similarly, when $n$ is even, coloring the vertices of $C_n$ with $k$ colors results in at least four colors that are used only once. Hence, in any case, for any coloring of $C_n$, with $k$ colors, there are at least three colors which are used only once. Now consider vertices $v_1 , v_2$ and $v_3$ whose colors appeared only once in a $k$-coloring and suppose that $P$ is the only path in $C_n$ from $v_1$ to $v_2$ that does not contain $v_3$. Any color-preserving automorphism $\alpha$ of $C_n$ must map $P$ onto itself, which means that $\alpha$ is the identity. This completes the proof.
\end{proof}

\begin{figure}[h!]
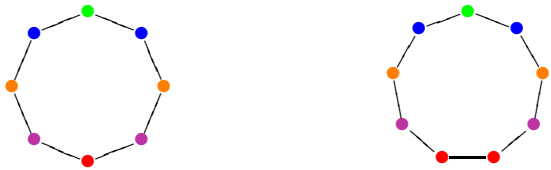
 
	\[ \xygraph{
		!{<0cm,0cm>; <1cm,0cm>:<0cm,1cm>::}
		!{(0,1)}*{\textcolor[rgb]{0.00,1.00,0.00}{\bullet}}="v1" !{(0,-1)}*{\textcolor[rgb]{1.00,0.00,0.00}{\bullet}}="v5"
		!{(0.7071,0.7071)}*{\textcolor[rgb]{0.00,0.00,1.00}{\bullet}}="v2" !{(-0.7071,-0.7071)}*{\textcolor[rgb]{0.74,0.20,0.64}{\bullet}}="v6" 
		!{(1,0)}*{\textcolor[rgb]{1.00,0.49,0.00}{\bullet}}="v3" !{(-1,0)}*{\textcolor[rgb]{1.00,0.49,0.00}{\bullet}}="v7"
		!{(0.7071,-0.7071)}*{\textcolor[rgb]{0.74,0.20,0.64}{\bullet}}="v4" !{(-0.7071,0.7071)}*{\textcolor[rgb]{0.00,0.00,1.00}{\bullet}}="v8"
		!{(5,1)}*{\textcolor[rgb]{0.00,1.00,0.00}{\bullet}}="v9" !{(5.64279,0.76604)}*{\textcolor[rgb]{0.0,0.0,1.0}{\bullet}}="v10"
		!{(5.98481,0.17364)}*{\textcolor[rgb]{1.00,0.49,0.00}{\bullet}}="v11" !{(5.86602,-0.5)}*{\textcolor[rgb]{0.74,0.20,0.64}{\bullet}}="v12" 
		!{(5.34202,-0.93969)}*{\textcolor[rgb]{1.00,0.00,0.00}{\bullet}}="v13" !{(4.65798,-0.93969)}*{\textcolor[rgb]{1.0,0.00,0.00}{\bullet}}="v14"
		!{(4.13397,-0.5)}*{\textcolor[rgb]{0.74,0.20,0.64}{\bullet}}="v15" !{(4.01519,0.17364)}*{\textcolor[rgb]{1.00,0.49,0.00}{\bullet}}="v16"
		!{(4.35721,0.76604)}*{\textcolor[rgb]{0.0,0.0,1.0}{\bullet}}="v17"
		"v1"-@[black] "v2" "v2"-@[black] "v3" "v3"-@[black] "v4" 
		"v4"-@[black] "v5" "v5"-@[black] "v6"
		"v6"-@[black] "v7" "v7"-@[black] "v8" "v1"-@[black] "v8"
		"v9"-@[black] "v10" "v10"-@[black] "v11" "v11"-@[black] "v12" 
		"v12"-@[black] "v13" "v13"-@[black] "v14"
		"v14"-@[black] "v15" "v15"-@[black] "v16"
		"v16"-@[black] "v17" "v9"-@[black] "v17"
	} \]
	\caption{Non-distinguishing colorings for $C_8$ and $C_9$ with  5 distinct colors.}\label{c_n}
\end{figure}

It, therefore, seems an interesting problem on its own to calculate the distinguishing  threshold of various families of graphs. As an example, one might ask this question for the Petersen graph $P$. In fact, this graph is a member of a well-known family of graphs, namely, the  \textit{Kneser} graphs. Let $0\leq k\leq n/2$. Then, the Kneser graph $K(n,k)$ is a graph whose vertex set is the set of all $k$-subsets of $\{1,\ldots,n\}$ where two vertices are adjacent if their corresponding sets do not intersect. It is easy to see that $K(n,k)$ is a vertex-transitive graph on ${n\choose k}$ vertices  with valency ${n-k\choose k}$, and that  $P=K(5,2)$. To study more on Kneser graphs see, for instance,~\cite{Godsil}.
 
We now evaluate the distinguishing threshold of the Kneser graphs $K(n,2)$.

\begin{proposition}\label{theta_of_kneser}
For any $n\geq 5$, we have $\theta(K(n,2))=\frac{1}{2}(n^2-3n+6)$. In particular, if $P$ is the Petersen graph, then   $\theta (P) =8$.
\end{proposition}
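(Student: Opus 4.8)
The plan rests on a principle already implicit in the proofs of Propositions~\ref{theta_of_P_n} and~\ref{theta_of_C_n}: for any graph $G$,
$$\theta(G)=1+\max\bigl\{\,o(\alpha)\ :\ \alpha\in\aut(G),\ \alpha\neq\id\,\bigr\},$$
where $o(\alpha)$ is the number of orbits of $\langle\alpha\rangle$ on $V(G)$. Indeed a coloring with exactly $k$ colors fails to be distinguishing precisely when some non-identity $\alpha$ is constant on every $\langle\alpha\rangle$-orbit, forcing $k\le o(\alpha)$; conversely, coloring the orbits of an optimal $\alpha$ with distinct colors gives a non-distinguishing coloring with $\max_\alpha o(\alpha)$ colors, and merging two color classes sends a non-distinguishing $k$-coloring to a non-distinguishing $(k-1)$-coloring, so the attainable color-counts of non-distinguishing colorings are exactly $1,\ldots,\max_\alpha o(\alpha)$. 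Since $n\ge 5$ we have $\aut(K(n,2))=S_n$ acting naturally on $V=\binom{[n]}{2}$ (see~\cite{Godsil}), so everything reduces to maximizing $o(\sigma)$ over $\sigma\in S_n\setminus\{\id\}$, where $o(\sigma)$ is the number of orbits of $\langle\sigma\rangle$ on $2$-subsets of $[n]$.

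First I would record an exact formula for $o(\sigma)$. If the cycles of $\sigma$ (fixed points included, as $1$-cycles) have lengths $\ell_1,\ldots,\ell_r$, then the $\binom{\ell}{2}$ pairs inside an $\ell$-cycle form $\lfloor\ell/2\rfloor$ orbits (grouped by the cyclic distance between the two points), and the $\ell_i\ell_j$ pairs split between distinct cycles of lengths $\ell_i,\ell_j$ form $\gcd(\ell_i,\ell_j)$ orbits; hence
$$o(\sigma)=\sum_{i=1}^{r}\Bigl\lfloor\tfrac{\ell_i}{2}\Bigr\rfloor+\sum_{1\le i<j\le r}\gcd(\ell_i,\ell_j).$$
For a transposition (cycle type $(2,1^{n-2})$) this equals $1+\binom{n-1}{2}=\tfrac12(n^2-3n+4)$, so $\theta(K(n,2))\ge\tfrac12(n^2-3n+6)$, and in particular $\theta(P)\ge 8$ for $P=K(5,2)$.

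The crux is the reverse inequality $o(\sigma)\le\tfrac12(n^2-3n+4)$ for every $\sigma\neq\id$, i.e.\ $\binom n2-o(\sigma)\ge n-2$. Subtracting the formula from $\binom n2=\sum_i\binom{\ell_i}{2}+\sum_{i<j}\ell_i\ell_j$ gives
$$\binom n2-o(\sigma)=\sum_{i}\Bigl(\binom{\ell_i}{2}-\bigl\lfloor\tfrac{\ell_i}{2}\bigr\rfloor\Bigr)+\sum_{i<j}\bigl(\ell_i\ell_j-\gcd(\ell_i,\ell_j)\bigr),$$
and I would bound the summands by $\binom{\ell}{2}-\lfloor\ell/2\rfloor\ge\max(\ell-2,0)$, with equality iff $\ell\le2$, and by $\ell_i\ell_j-\gcd(\ell_i,\ell_j)\ge\ell_i+\ell_j-2$ (routine for all $\ell_i,\ell_j\ge1$). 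Writing $f$ for the number of fixed points and $a_1,\ldots,a_s\ge2$ for the lengths of the nontrivial cycles, so $m:=a_1+\cdots+a_s=n-f\ge2$ and $s\ge1$, summation turns the target into $(f+s-1)(m-s)\ge f+2s-2$; with $t:=m-s=\sum_k(a_k-1)\ge s$ this is $f(t-1)\ge0$ for $s=1$ and follows from $(s-1)(f+s-2)\ge0$ for $s\ge2$, both clear. The point that needs care is the equality analysis: the componentwise bounds are tight only if $\sigma$ is an involution, and the reduced inequality is then tight only when $s=1$ (a transposition) or $s=2$ and $f=0$, i.e.\ $n=4$ --- excluded by $n\ge5$. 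Therefore $\max_{\sigma\neq\id}o(\sigma)=\binom n2-(n-2)=\tfrac12(n^2-3n+4)$, whence $\theta(K(n,2))=\tfrac12(n^2-3n+6)$; for $n=5$ this is $\theta(P)=8$.
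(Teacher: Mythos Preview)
Your proof is correct and takes a genuinely different route from the paper's.

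The paper argues both bounds directly in the graph $K(n,2)$. For the lower bound it exhibits an explicit non-distinguishing $\bigl(\tfrac12(n^2-3n+4)\bigr)$-coloring (pairing $\{i,1\}$ with $\{i,2\}$) preserved by the transposition $(1\;2)$. For the upper bound it fixes a hypothetical non-distinguishing $k$-coloring and a moved pair $u\mapsto v$, partitions the vertex set into $\{u,v\}$, the common neighbourhood $R$, the private neighbourhoods $S,S'$, and the rest $X$, and uses the constraint that $N(u)$ and $N(v)$ carry the same colour palette to bound the number of available colours by $1+r+s+|X|$; a short computation then forces $k<\tfrac12(n^2-3n+6)$.

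You instead isolate a general principle---$\theta(G)=1+\max_{\alpha\neq\id}o(\alpha)$---and reduce the problem to a purely permutation-theoretic optimisation: maximise the number of $\langle\sigma\rangle$-orbits on $\binom{[n]}{2}$ over $\sigma\in S_n\setminus\{\id\}$. Your closed formula for $o(\sigma)$ in terms of cycle lengths, together with the two elementary inequalities $\binom{\ell}{2}-\lfloor\ell/2\rfloor\ge\max(\ell-2,0)$ and $\ell_i\ell_j-\gcd(\ell_i,\ell_j)\ge\ell_i+\ell_j-2$, makes the optimisation routine and pins down the transposition as the unique maximiser for $n\ge5$. This approach is more structural: the orbit-count lemma applies to \emph{any} graph with known automorphism group, and the cycle-index calculation would extend with little change to $K(n,r)$ for other $r$. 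The paper's neighbourhood argument, by contrast, is tailored to $K(n,2)$ and does not identify which automorphism is extremal, but it stays closer to the graph and avoids any discussion of cycle types. A minor note: your equality analysis is a pleasant bonus but not needed for the statement, since the transposition already realises the bound.
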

\begin{proof}
Assume that $k\geq\frac{1}{2}(n^2-3n+6)$ and, to get a  contradiction, suppose that  there is a $k$-coloring of $K(n,2)$ which is not distinguishing. This implies that there is a color-preserving automorphism  $\alpha$ such that $\alpha(u)=v$, for some distinct vertices $u$ and $v$. Let $R$  be the set of common neighbors of $u$ and $v$. Thus 
\[ r=|R|={n-4+i\choose 2}, \]
where $i\in \{0,1\}$ is the possible size of the intersection of the corresponding $2$-sets of $u$ and $v$. Furthermore, let 
$S=N(u)\setminus N[v]$
 and 
  $S'=N(v)\setminus N[u]$. Therefore, we have 
\[s=|S|=|S'|={n-2\choose 2}-r+i-1.\]
Moreover, let 
\[ X=V(K(n,2))\setminus \left(\{u,v\}\cup R\cup S\cup S'\right).\]
Hence 
\[ x=|X|={n\choose 2}-(2+r+2s). \]
Note that, since we have $\alpha(N(u))=N(v)$, the color-pallet of $N(u)$ must be the same as that of $N(v)$. Thus, the number of colors used for coloring the vertices in $\{u,v\}\cup R\cup S\cup S'$ is at most $1+r+s$. Consequently, we have to color the vertices in $X$ with $k-(1+r+s)$ colors. But this is impossible because $k-(1+r+s)>x$. The reason is as follows:
\begin{align*}
 k-(1+r+s)-x&=k-1-r-s-{n\choose 2} +2+r+2s\\
 &=k-{n\choose 2}+1+s\\
 &\geq  \frac{1}{2}(n^2-3n+6) -{n\choose 2}+1 + {n-2\choose 2} - {n-4+i\choose 2} + i-1  \\
 & \geq \frac{1}{2}(n^2-3n+6)  -{n\choose 2}+1 + {n-2\choose 2} - {n-3\choose 2}>0
 \end{align*}
 
Now, suppose that $k= \frac{1}{2}(n^2-3n+4)=\frac{1}{2}(n^2-3n+6)-1$. We show that there is a $k$-coloring of $K(n,2)$ which is not distinguishing. Color the vertices $\{3,1 \}$ and $\{3,2\}$ by the color $1$, the vertices $\{4,1 \}$ and $\{4,2\}$ by the color $2$, $\ldots$, and the vertices $\{n,1 \}$ and $\{n,2\}$ by the color $n-2$, while the  other ${n\choose 2}-2n+4$ vertices receive the remaining $k-n+2$ colors. Note that 
\[{n\choose 2} -2n+4 =k-n+2.\]
Now, consider the mapping $(\{3,1\},\{3,2\} )(\{4,1\},\{4,2\} )(\{5,1\},\{5,2\} )\cdots(\{n,1\},\{n,2\})$ which is a nontrivial color-preserving automorphism of $K(n,2)$. 
\end{proof}

The next result reveals the importance of the distinguishing threshold in counting the number of non-equivalent distinguishing colorings.

\begin{theorem}\label{stirling_general_graph}
	Let $G$ be a graph  on $n$ vertices. For any $k\geq \theta(G)$ we have
	\[ \varphi_k(G)=k! \stirling{n}{k}/|\aut(G)|. \]
\end{theorem}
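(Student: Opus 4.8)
The plan is to exploit the defining property of the distinguishing threshold: since $k\ge\theta(G)$, \emph{every} coloring of $V(G)$ with $k$ colors is distinguishing, and in particular every \emph{surjective} coloring $c\colon V(G)\to\{1,\dots,k\}$ is distinguishing. Because a $k$-distinguishing coloring is by definition one that uses all $k$ colors, $\varphi_k(G)$ is exactly the number of equivalence classes (in the sense of the equivalence relation defined at the start of Section~\ref{dist-col}) of surjective colorings $V(G)\to\{1,\dots,k\}$. The total number of surjective functions from an $n$-set to a $k$-set is the standard count $k!\stirling{n}{k}$, so the whole problem reduces to showing that each equivalence class has size exactly $|\aut(G)|$.

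First I would set up the group action of $\aut(G)$ on the set $\mathcal{S}$ of surjective colorings $V(G)\to\{1,\dots,k\}$ by $(\alpha\cdot c)(v)=c(\alpha^{-1}(v))$; note that $\alpha\cdot c$ is again surjective since $\alpha^{-1}$ is a bijection of $V(G)$, so the action is well defined, and two surjective colorings are equivalent precisely when they lie in the same orbit. Next I would prove that this action is \emph{free}: if $\alpha\cdot c=c$ for some $c\in\mathcal{S}$, then $c(v)=c(\alpha(v))$ for all $v\in V(G)$, which is exactly the statement that $\alpha$ is a color-preserving automorphism of the coloring $c$; but $c$ is distinguishing (because $k\ge\theta(G)$), so $\alpha=\id$. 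Hence the stabilizer of every $c\in\mathcal{S}$ is trivial.

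Finally, by the orbit–stabilizer theorem every orbit of a free action on $\mathcal{S}$ has size $|\aut(G)|$, so the number of orbits is $|\mathcal{S}|/|\aut(G)|=k!\stirling{n}{k}/|\aut(G)|$, which is the claimed value of $\varphi_k(G)$. As a consistency check, taking $k=n$ (where $\theta(G)\le n$ always holds) and using $\stirling{n}{n}=1$ recovers Proposition~\ref{k=n}.

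I do not expect a real obstacle here. The only point requiring care is the freeness step, where one must invoke the implication ``$k\ge\theta(G)$ $\Rightarrow$ every $k$-coloring of $G$ is distinguishing $\Rightarrow$ no surjective coloring admits a nontrivial color-preserving automorphism'' in the correct direction, together with the small but essential observation that $\varphi_k$ counts only colorings that use all $k$ colors, so that the relevant object is the set of surjective colorings rather than all colorings.
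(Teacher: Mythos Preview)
Your proposal is correct and follows essentially the same approach as the paper: use $k\ge\theta(G)$ to reduce to counting non-equivalent surjective colorings, count these as $k!\stirling{n}{k}$, and divide by $|\aut(G)|$. In fact your version is more complete than the paper's, which asserts the final division without explicitly verifying that the $\aut(G)$-action on surjective colorings is free; your freeness argument via the orbit--stabilizer theorem fills exactly that gap.
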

\begin{proof}
	When $k\geq \theta(G)$, any $k$-coloring of $G$ is distinguishing. Therefore, to calculate $\varphi_k (G)$, we must only count the number of non-equivalent $k$-colorings. We know that the total number of $k$-colorings of $G$ is equal to the number of surjective functions from the set of vertices to the set of colors. As it is noted just before Theorem \ref{varphi_P_n}, this number is $k! \stirling{n}{k}$. Furthermore, for any automorphism $\alpha \in \mathrm{Aut}(G)$, the image of a $k$-coloring $f$ under $\alpha$ is equivalent to $f$. Consequently, the result holds.
\end{proof}

Using Theorem~\ref{stirling_general_graph} and Propositions~\ref{theta_of_P_n} and~~\ref{theta_of_C_n} the following results follow immediately.

\begin{corollary}\label{stirling_P_n}
	Let $n\geq 2$. For any $k\geq \lceil\frac{n}{2}\rceil+1$ we have
	\[ \varphi_k(P_n)=k! \stirling{n}{k}/2.\qed \]
\end{corollary}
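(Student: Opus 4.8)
The plan is to derive Corollary~\ref{stirling_P_n} as a direct specialization of Theorem~\ref{stirling_general_graph}, so almost all of the work has already been done; the only two facts I need to supply are the value of $\theta(P_n)$ and the order of $\aut(P_n)$. First I would invoke Proposition~\ref{theta_of_P_n}, which tells us $\theta(P_n)=\lceil\frac{n}{2}\rceil+1$, so that the hypothesis ``$k\geq\theta(G)$'' of Theorem~\ref{stirling_general_graph} becomes precisely the hypothesis ``$k\geq\lceil\frac{n}{2}\rceil+1$'' of the corollary. Thus for any such $k$, Theorem~\ref{stirling_general_graph} applies verbatim and gives $\varphi_k(P_n)=k!\stirling{n}{k}/|\aut(P_n)|$.

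The remaining point is to observe that $|\aut(P_n)|=2$ for every $n\geq 2$: a path has exactly one nontrivial automorphism, the reflection that reverses the linear order of its vertices, so the automorphism group is $\mathbb{Z}_2$. (One should perhaps note in passing the harmless edge case $n=2$, where $P_2\cong K_2$ still has $|\aut|=2$, so the formula is uniform over the stated range.) Substituting $|\aut(P_n)|=2$ into the expression from Theorem~\ref{stirling_general_graph} yields $\varphi_k(P_n)=k!\stirling{n}{k}/2$, which is exactly the claim.

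Honestly, there is no real obstacle here: the statement is flagged in the text as following ``immediately'' from Theorem~\ref{stirling_general_graph} and Proposition~\ref{theta_of_P_n}, and the proof is just the two-line substitution above. If anything, the only thing requiring a moment's care is making sure the range of $k$ in the corollary matches the threshold exactly (it does, by Proposition~\ref{theta_of_P_n}) and recalling the elementary fact that $\aut(P_n)$ has order $2$; both are standard. So my write-up would simply be: apply Proposition~\ref{theta_of_P_n} to identify the threshold, apply Theorem~\ref{stirling_general_graph}, and plug in $|\aut(P_n)|=2$.
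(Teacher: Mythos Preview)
Your proposal is correct and matches the paper's own approach exactly: the corollary is stated with a \qed\ and the text just above it says it follows ``immediately'' from Theorem~\ref{stirling_general_graph} and Proposition~\ref{theta_of_P_n}. The only ingredients are the threshold $\theta(P_n)=\lceil n/2\rceil+1$ and $|\aut(P_n)|=2$, which is precisely what you supplied.
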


\begin{corollary}\label{stirling_C_n}
	Let $n\geq 3$. For any $k\geq \lfloor\frac{n}{2}\rfloor+2$ we have
	\[ \varphi_k(C_n)=k! \stirling{n}{k}/2n.\qed \]
\end{corollary}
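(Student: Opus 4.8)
The plan is to derive the formula as a direct application of Theorem~\ref{stirling_general_graph} together with the value of the distinguishing threshold computed in Proposition~\ref{theta_of_C_n}. First I would invoke Proposition~\ref{theta_of_C_n} to record that $\theta(C_n) = \lfloor \frac{n}{2} \rfloor + 2$, so that the hypothesis $k \geq \lfloor \frac{n}{2} \rfloor + 2$ is exactly the hypothesis $k \geq \theta(C_n)$ required by Theorem~\ref{stirling_general_graph}. Applying that theorem to $G = C_n$ immediately yields
\[ \varphi_k(C_n) = k!\stirling{n}{k}\big/|\aut(C_n)|. \]

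The only remaining point is to identify $|\aut(C_n)|$. I would note that for every $n \geq 3$ the automorphism group of the cycle $C_n$ is the dihedral group of order $2n$, generated by the $n$ rotations and the $n$ reflections; this is a standard fact, and one can check it covers the small cases as well (for instance $\aut(C_3) = S_3$ has order $6 = 2\cdot 3$, and $\aut(C_4) = D_4$ has order $8 = 2\cdot 4$). Substituting $|\aut(C_n)| = 2n$ into the displayed equation gives $\varphi_k(C_n) = k!\stirling{n}{k}/2n$, as claimed.

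There is essentially no obstacle here: the content is entirely packaged in Theorem~\ref{stirling_general_graph} and Proposition~\ref{theta_of_C_n}, and the corollary is just the instantiation $G = C_n$ with the order of the dihedral group plugged in. If anything needs care, it is only the reminder that the formula from Theorem~\ref{stirling_general_graph} presupposes $k \geq \theta(C_n)$, which is why the range $k \geq \lfloor \frac{n}{2}\rfloor + 2$ appears in the statement and cannot be relaxed in general.
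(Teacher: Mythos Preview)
Your proposal is correct and matches the paper's approach exactly: the paper states that the corollary follows immediately from Theorem~\ref{stirling_general_graph} and Proposition~\ref{theta_of_C_n}, which is precisely what you do, with the standard fact $|\aut(C_n)|=2n$ supplying the denominator.
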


Furthermore, it is immediate to observe that, for any $n\geq 4$, $ \varphi_{n-1}(P_n)=\frac{1}{4}(n-1)n!$ and that for any $n\geq 5$, $\varphi_{n-1}(C_n)=\frac{1}{4}(n-1)(n-1)!$, which agree with the tables in \nameref{App_A}.

\section{Non-equivalent Distinguishing Partitions}\label{dist-part}
In this section, we turn our attention to the case of different distinguishing partitions of graphs. Let $G$ be a graph and let $P_1$ and $P_2$ be two partitions of the vertices of $G$. We say $P_1$ and $P_2$ are \textit{equivalent} if there is a non-trivial automorphism of $G$ which maps $P_1$ onto $P_2$. The number of non-equivalent partitions of $G$, with at most $k$ cells, is called the \textit{partition number} of $G$ and is denoted by $\Pi_k(G)$. 

Meanwhile, a \emph{distinguishing coloring partition} of a graph $G$ is a partition of the vertices of $G$ such  that it induces a distinguishing coloring for $G$. Note that the minimum number of cells required for such a partition equals the distinguishing number $D(G)$. The number of non-equivalent  distinguishing coloring partitions of a graph $G$ with at most $k$ cells is shown by $\Psi_k (G)$, while the number of non-equivalent distinguishing coloring partitions of a graph $G$ with exactly $k$ cells is shown by $\psi_k (G)$. It is not difficult to observe that 
\[\Psi_k(G)=\sum_{j\leq k }\psi_j(G),\]
and that $\psi_n(G)=1$, for any graph $G$ on $n$ vertices. 
In what follows, we deal with $\psi_k(G)$ and present some calculations where $G$ is a path or a cycle. We start with the following observation which states how $\psi_2(P_n)$ is related to $\varphi_2(P_n)$.

\begin{proposition}\label{psi_vs_phi}
	For any $n\geq 1$, $\psi_2(P_{2n+1})=\frac{1}{2}\varphi_2(P_{2n+1})$ and $\psi_2(P_{2n})=\frac{1}{2}\varphi_2(P_{2n})+2^{n-2}$.
\end{proposition}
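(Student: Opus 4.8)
The plan is to realize both $\psi_2(P_n)$ and $\varphi_2(P_n)$ as orbit counts for a small group acting on the set $\mathcal C$ of distinguishing colorings of $P_n$ that use exactly two colors, and then apply Burnside's lemma. Write $\rho$ for the unique non-identity automorphism of $P_n$ (the reversal), so $\aut(P_n)=\{\id,\rho\}$ for every $n\geq 2$. Two commuting involutions act on $\mathcal C$: precomposition by $\rho$, and the color swap $c\mapsto\bar c$ (interchanging the names of the two colors); together they generate a group $H\cong\mathbb Z_2\times\mathbb Z_2$. Since whether a coloring is preserved by an automorphism depends only on its color classes, a distinguishing coloring partition with exactly two cells is the same thing as an unordered pair $\{c,\bar c\}$ of distinguishing two-colorings, and two such partitions are equivalent precisely when the corresponding colorings lie in a common $H$-orbit; hence $\psi_2(P_n)=|\mathcal C/H|$. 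Likewise $\varphi_2(P_n)=|\mathcal C/\langle\rho\rangle|$, and because a distinguishing coloring has trivial stabilizer in $\aut(P_n)$, every $\langle\rho\rangle$-orbit has size two, so $|\mathcal C|=2\varphi_2(P_n)$.

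Next I would compute the fixed-point sets of the three non-identity elements of $H$ acting on $\mathcal C$. The element $\rho$ alone and the swap alone have no fixed points: a $\rho$-invariant coloring is not distinguishing, and no two-coloring equals its own swap. The only contribution therefore comes from the ``reverse-and-swap'' element, whose fixed set $\mathcal F$ consists of those colorings with $c(v)\neq c(\rho(v))$ for all $v$, i.e.\ the colorings that are bichromatic on every mirror pair $\{v_i,v_{n+1-i}\}$. Such a coloring automatically uses both colors and, since it cannot be $\rho$-invariant, is automatically distinguishing, so $\mathcal F\subseteq\mathcal C$. Burnside's lemma then yields $\psi_2(P_n)=\tfrac14\bigl(2\varphi_2(P_n)+|\mathcal F|\bigr)=\tfrac12\varphi_2(P_n)+\tfrac14|\mathcal F|$.

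Finally I would count $\mathcal F$. If $n=2m+1$ is odd, the central vertex is fixed by $\rho$, so the condition $c(v)\neq c(\rho(v))$ fails there and $\mathcal F=\emptyset$, giving $\psi_2(P_{2n+1})=\tfrac12\varphi_2(P_{2n+1})$. If $n=2m$ is even there are exactly $m$ mirror pairs, they partition $V(P_n)$, and a coloring in $\mathcal F$ is obtained by choosing, independently for each pair, which of its two vertices receives color $1$; hence $|\mathcal F|=2^{m}$ and $\psi_2(P_{2n})=\tfrac12\varphi_2(P_{2n})+2^{\,n-2}$. (The case $n=1$, i.e.\ $P_2$, is consistent, since there $2^{\,n-2}=\tfrac12$ simply combines with the half-integer $\tfrac12\varphi_2(P_2)$.)

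The main point requiring care — essentially the only non-formal step — is the identification in the first paragraph: that equivalence of two-cell distinguishing partitions corresponds exactly to the $H$-action on $\mathcal C$, and that restricting attention to surjective (exactly-two-color) colorings does not disturb this dictionary or the freeness of the $\langle\rho\rangle$-action. Once that is set up, the remainder is the short Burnside computation above.
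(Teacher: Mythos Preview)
Your proof is correct. Both you and the paper reduce the problem to counting the ``self-complementary'' colorings --- those $c$ with $c\circ\rho=\bar c$ --- and both obtain $2^n$ such colorings of $P_{2n}$ (equivalently $2^{n-1}$ such partitions) and none for $P_{2n+1}$. The difference is in the framing: the paper argues directly that the forgetful map from coloring classes to partition classes is $2$-to-$1$ except over the ``type~1'' partitions (those for which the two induced colorings are $\rho$-equivalent), then adds back and divides; you instead set up the $\mathbb Z_2\times\mathbb Z_2$-action on the set $\mathcal C$ of distinguishing $2$-colorings and apply Burnside's lemma. Your formulation is a bit more systematic --- the fixed-point analysis explains \emph{why} only the reverse-and-swap element matters, and the identification $\psi_2(P_n)=|\mathcal C/H|$ makes the passage from colorings to partitions transparent --- and it would generalize more readily to larger color sets or automorphism groups. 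The paper's argument, on the other hand, is slightly more elementary in that it avoids invoking Burnside explicitly. Substantively the two computations are the same; your ``$\mathcal F$'' is exactly (the coloring version of) the paper's ``type~1''.
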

\begin{proof}
First we consider the path $P_{2n+1}$. It is evident that any distinguishing coloring partition with two cells induces two distinguishing colorings. Furthermore, since only one of the two cells contains the middle vertex of $P_{2n+1}$, these two colorings are non-equivalent; this proves the first part. 

To see the next part, we note that $\varphi_2(P_{2n})$ counts two different types of distinguishing coloring partitions: it counts type~1, which consists of the partitions in which swapping the two colors makes the resulting colorings equivalent, only once, while it counts type~2, which are the remaining partitions, twice. Hence, if we add the number of distinguishing coloring partitions  of type~1 to $\varphi_2(P_{2n})$, then  every distinguishing coloring partitions of $P_{2n}$ is counted twice. On the other hand, it is not hard to see that the number of distinguishing coloring partitions of type~1 is equal to $2^n/2=2^{n-1}$. Therefore,
\[ \psi_2(P_{2n})=\frac{\varphi_2(P_{2n})+2^{n-1}}{2}, \]
which completes the proof.
\end{proof}

An immediate consequence of Proposition~\ref{psi_vs_phi} is that $\varphi_2(P_n)$ is always an even  number.  We can, furthermore, calculate $\psi_k(P_{n})$ in the case $k=n-1$.

\begin{proposition}\label{psi_k=n-1_for_Pn}
	Let $n\geq 2$. We have $\psi_{n-1}(P_n)=\lfloor\frac{n^2}{4}\rfloor$.
\end{proposition}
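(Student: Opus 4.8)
The plan is to reduce $\psi_{n-1}(P_n)$ to an orbit-counting computation. Write the vertices of $P_n$ as $v_1,\dots,v_n$ in path order, so that $\aut(P_n)=\{\id,\sigma\}$ with $\sigma(v_k)=v_{n+1-k}$. Since $P_n$ has $n$ vertices, a partition of $V(P_n)$ into exactly $n-1$ cells has one cell of size two and $n-2$ singleton cells, hence is completely determined by the unordered pair $\{v_i,v_j\}$ ($i<j$) forming its doubleton; there are $\binom{n}{2}$ such partitions, which I will call \emph{candidates}.

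The first step is to show that, for $n\ge 4$, every candidate is a distinguishing coloring partition. The coloring it induces (one colour per cell, all colours distinct) is preserved by a nontrivial automorphism only if $\sigma$ fixes every cell setwise; in particular $\sigma$ would have to fix each singleton cell $\{v_k\}$ with $k\notin\{i,j\}$, which forces $v_k=v_{n+1-k}$, i.e. $2k=n+1$. Since $n-2\ge 2$ indices lie outside $\{i,j\}$ while $\sigma$ fixes at most one vertex, this is impossible, so $\sigma$ does not preserve the induced coloring and the candidate is a distinguishing coloring partition. Consequently $\psi_{n-1}(P_n)$ is exactly the number of $\aut(P_n)$-orbits on the set of candidates.

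The second step is the orbit count. Because $\sigma$ always carries singleton cells to singleton cells, it sends the candidate with doubleton $\{v_i,v_j\}$ to the one with doubleton $\{v_{n+1-i},v_{n+1-j}\}$, so a candidate is $\sigma$-invariant precisely when $\{i,j\}=\{n+1-i,n+1-j\}$, i.e. when $i+j=n+1$; counting pairs $\{i,j\}$ with $i<j$ and $i+j=n+1$ gives exactly $\lfloor n/2\rfloor$ fixed candidates. By the orbit-counting (Burnside) lemma the number of orbits equals $\frac{1}{2}\big(\binom{n}{2}+\lfloor n/2\rfloor\big)$, and a brief case split on the parity of $n$ (writing $n=2m$ or $n=2m+1$) identifies this quantity with $\lfloor n^2/4\rfloor$. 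The small cases $n=2,3$ are settled by direct inspection.

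The main obstacle is the first step: one must make sure that none of the $\binom{n}{2}$ candidate partitions secretly admits a nontrivial colour-preserving automorphism, so that $\psi_{n-1}(P_n)$ genuinely counts all $\aut(P_n)$-orbits of candidates rather than a proper subset of them; after that, pinning down the $\sigma$-fixed candidates and carrying out the parity arithmetic is routine.
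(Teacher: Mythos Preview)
Your argument for $n\geq 4$ is correct and takes a different route from the paper. The paper splits the path into two halves $A$ and $B$ and enumerates admissible pairs case by case (both vertices in $A$; one in each half), treating even and odd $n$ separately with ad hoc bookkeeping to avoid overcounting $\sigma$-equivalent pairs. You instead observe that, once every candidate is shown to be a distinguishing coloring partition, the problem is purely an orbit count of $2$-subsets of $V(P_n)$ under $\langle\sigma\rangle$, and you finish in one stroke via Burnside by identifying the $\lfloor n/2\rfloor$ fixed pairs $\{v_i,v_{n+1-i}\}$. This is cleaner, nearly unifies the parity cases, and makes transparent why the answer is $\tfrac12\bigl(\binom{n}{2}+\lfloor n/2\rfloor\bigr)$.

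One caution: your line ``the small cases $n=2,3$ are settled by direct inspection'' does not survive that inspection. For $n=2$ the sole one-cell partition is not distinguishing, so $\psi_{1}(P_2)=0\neq 1=\lfloor 4/4\rfloor$; for $n=3$ the pair $\{v_1,v_3\}$ yields a non-distinguishing partition (the reflection fixes every cell), leaving a single equivalence class, so $\psi_{2}(P_3)=1\neq 2=\lfloor 9/4\rfloor$, in agreement with the paper's own table. In these two cases your first step genuinely fails because there are too few singleton cells to force $\sigma$ to be the identity. The paper's proof has the same blind spot, so the proposition really holds only for $n\geq 4$.
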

\begin{proof}
	As there are $n-1$ different cells, any partition  of the vertices  of $P_n=v_1v_2 \cdots v_n$ will result in exactly one pair of vertices in the same part of the partition. So it suffices to count the number of different ways to choose two vertices such that including them in one cell and all the other vertices in singleton cells, results in a distinguishing coloring partition such that no two such partitions are mapped to each other using the  non-trivial automorphism of $P_n$. First, assume $n$ is even.  We split $P_n$ to two halves: $A=\{v_1,\ldots,v_{\frac{n}{2}}\}$ and $B=\{v_{\frac{n}{2}+1},\ldots,v_{n}\}$. There are two non-equivalent cases: (a) the two vertices are chosen from $A$, and (b) one is chosen from $A$  and the other one is chosen from $B$. The case (a) contains ${n/2\choose 2}$ ways. On the other hand, case (b), in turn, contains the following subcases:  if one chooses $v_1$, then there are $|B|=\frac{n}{2}$ choices for the second vertex; if one chooses $v_2$, then there are $\frac{n}{2}-1$ choices for the second vertex (note that  the case $(v_2,v_n)$  is equivalent to   $(v_1,v_{n-1})$ which has already been counted). Continuing this argument, we will have 
	\[ \frac{n}{2}+(\frac{n}{2}-1)+\cdots+1=\frac{n^2}{8}+\frac{n}{4} \]
	choices in case (b). Hence, the total number of ways is $n^2/4$ and the result follows.
	
	In the case where $n$ is odd, we set $A=\{v_1,\ldots,v_{\frac{n-1}{2}}\}$ and $B=\{v_{\frac{n+1}{2}+1},\ldots,v_{n}\}$. Similar to the even case above,  there are $\frac{(n-1)^2}{4}$ choices for the pairs  to belong to the same cell of the partition, where either the pair is chosen from $A$ or one vertex from $A$ and the other one from $B$. In addition, there are $\frac{n-1}{2}$ further non-equivalent partitions  in this case, in which the 2-vertex cell consists of a vertex of  $A$ along  with the middle  vertex $v_{\frac{n+1}{2}}$. We conclude that the total number of choices is $(n^2-1)/4$, which completes the proof.
\end{proof}

In the next proposition, we consider the same problem as in Proposition~\ref{psi_k=n-1_for_Pn}  for the case of cycles in which we make use of the distinguishing threshold of $C_n$.  
\begin{proposition}\label{psi_k=n-1_for_Cn}
	Let $n\geq 3$. We have $\psi_{n-1}(C_n)=0,1$, if $n=3$ and $n=4$, respectively, and $\psi_{n-1}(C_n)=\lfloor \frac{n}{2}\rfloor$, if $n\geq 5$. 
\end{proposition}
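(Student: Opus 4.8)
The plan is to separate the two genuinely exceptional cases $n=3,4$ from the main range $n\ge 5$, and in the main range to collapse the count to a purely combinatorial orbit-counting problem by invoking the distinguishing threshold $\theta(C_n)=\lfloor n/2\rfloor+2$ from Proposition~\ref{theta_of_C_n}.

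For $n=3$ we have $C_3=K_3$ and $D(K_3)=3>2$, so no partition of $V(C_3)$ into $n-1=2$ cells can induce a distinguishing coloring; hence $\psi_2(C_3)=0$. For $n=4$ the threshold $\theta(C_4)=4$ exceeds $n-1=3$, so I would argue directly: a partition of $V(C_4)$ into $3$ cells is determined by its unique two-element block, which is either a pair of adjacent vertices or a pair of antipodal vertices. The rotations of $C_4$ fuse the two antipodal pairs into one $\aut(C_4)$-orbit and the four adjacent pairs into another, so there are exactly two non-equivalent partitions. The antipodal one is preserved by the reflection transposing that pair and fixing the other two vertices (each colour class is mapped to itself), hence is not distinguishing, whereas for the adjacent one any colour-preserving automorphism must fix the two singleton-coloured vertices, which are adjacent, forcing the identity; thus exactly one of the two is distinguishing and $\psi_3(C_4)=1$.

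For $n\ge 5$ the crucial observation is the numerical inequality $n-1\ge \lfloor n/2\rfloor+2=\theta(C_n)$, which is equivalent to $\lceil n/2\rceil\ge 3$ and therefore holds precisely for $n\ge 5$ (this is exactly why $n=3,4$ are exceptional). By Proposition~\ref{theta_of_C_n}, every $(n-1)$-coloring of $C_n$ is then distinguishing, so $\psi_{n-1}(C_n)$ merely counts the non-equivalent partitions of $V(C_n)$ with exactly $n-1$ cells. Such a partition has a single block of size $2$ and all others singletons, hence is encoded by an unordered pair $\{u,v\}\subseteq V(C_n)$, and two such partitions are equivalent exactly when the corresponding pairs lie in one orbit of $\aut(C_n)=D_n$. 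A pair of vertices of $C_n$ is characterised up to $D_n$ by its cycle-distance $d\in\{1,\dots,\lfloor n/2\rfloor\}$, and already the rotation subgroup acts transitively on the pairs at a fixed distance, so there are exactly $\lfloor n/2\rfloor$ orbits; therefore $\psi_{n-1}(C_n)=\lfloor n/2\rfloor$.

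The only places needing care are the verification that $n-1\ge\theta(C_n)$ fails exactly at $n=3,4$ — which is what forces those to be exceptional — and the hands-on analysis of $C_4$, where the distinguishing requirement actually discards the antipodal partition; the orbit count for $n\ge 5$ is routine. I therefore do not anticipate a genuine obstacle, only the bookkeeping of the small cases. This argument is the partition-analogue of the way Theorem~\ref{stirling_general_graph} and Corollary~\ref{stirling_C_n} use the threshold to trivialise counting.
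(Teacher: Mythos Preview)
Your argument is correct and follows essentially the same approach as the paper: handle $n=3,4$ by direct inspection, and for $n\ge 5$ invoke $n-1\ge\theta(C_n)$ from Proposition~\ref{theta_of_C_n} to reduce $\psi_{n-1}(C_n)$ to the count of $\aut(C_n)$-orbits on $2$-subsets of $V(C_n)$, which are parametrised by cycle-distance. Your treatment of $C_4$ is more explicit than the paper's (which simply says the small cases are checked directly), but the core idea is identical.
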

\begin{proof}
	It is easy to check the result in the small cases $n=3,4$ directly. We, therefore, consider the case $n\geq 5$. In this case, $n-1\geq \lfloor n/2\rfloor+2=\theta(C_n)$ and, according to Proposition~\ref{theta_of_P_n}, any coloring of $C_n$ with $n-1$ colors is a distinguishing coloring. Thus, it is sufficient to count the number of non-equivalent partitions of the vertices  $\{v_1, v_2, \cdots, v_n\}$ of $C_n$ into $n-1$ cells, such that no two such partitions are mapped to each other using an automorphism of $C_n$. It is not hard to see that there is a one-to-one correspondence between the family of such partitions and the set of all possible distances in $C_n$. In other words, the only such partitions are the ones including the cells $\{v_1,v_2\}$, $\{v_1,v_3\}$, $\ldots$, $\{v_1,v_{\lfloor\frac{n+1}{2}\rfloor}\}$. Therefore the result follows.
\end{proof}

We note that $\psi_k(G)=\Psi_k(G)-\Psi_{k-1}(G)$, for any graph $G$; in other words, in order to calculate $\psi_k(G)$, we can use $\Psi_k(G)$ if we already know the latter index of $G$. In the next theorem, we calculate $\Psi_k(P_n)$. To do so, we need the following notation. Recall that, for a given graph $G$, a distinguishing partition is a partition of the vertex set of $G$ such that no nontrivial automorphism of $G$ can preserve it (see~\cite{Ell}) and  that $DP(G)$ is the minimum number of cells in a distinguishing partition of $G$. It is evident that if $G$ admits a distinguishing partition, then $DP(G)\geq D(G)$. For $n\geq 3$, the path $P_n$ admits a distinguishing partition and $DP(P_n)=2$. 

 We define $\Xi_k(G)$ to be the number of non-equivalent distinguishing partitions of $G$ with at most $k$ cells. Correspondingly, $\xi_k(G)$ denotes the  number of non-equivalent distinguishing partitions of $G$ with exactly $k$ cells. It is not hard to see that
 \[ \Xi_k(G)=\sum_{j\leq k} \xi_j(G). \]
Note that, if $G$ does not admit a distinguishing partition with exactly $k$ cells, then $\xi_k(G)=0$ and that $\xi_k(G)=\Xi_k(G)-\Xi_{k-1}(G)$.

\begin{theorem}\label{xi}
Let $n\geq 2$. For any $k\geq 2$ we have
\[\Psi_k(P_n)=\Pi_k(P_n)-\Pi_k(P_{\lceil \frac{n}{2}\rceil})- \Xi_k(P_{\lceil \frac{n}{2}\rceil})\]
\end{theorem}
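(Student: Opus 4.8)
The plan is to count the equivalence classes of partitions of $P_n$ that are \emph{not} distinguishing coloring partitions and subtract this count from $\Pi_k(P_n)$. Write $P_n=v_1v_2\cdots v_n$, let $\rho$ be its unique nontrivial automorphism (the reflection $v_i\mapsto v_{n+1-i}$), and put $m=\lceil n/2\rceil$. The first observation is that a partition $\mathcal{P}$ of $V(P_n)$ fails to induce a distinguishing coloring exactly when the coloring it defines is preserved by $\rho$, and that $\rho$ preserves this coloring if and only if it fixes every cell of $\mathcal{P}$ setwise, i.e.\ every cell of $\mathcal{P}$ is a union of $\rho$-orbits on $V(P_n)$. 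Now $\rho$ has exactly $m$ orbits on $V(P_n)$ --- the $\lfloor n/2\rfloor$ transposed pairs, together with the fixed vertex $v_{(n+1)/2}$ when $n$ is odd --- so grouping these orbits gives a bijection, preserving the number of cells, between the non-distinguishing partitions of $P_n$ and the partitions of an $m$-element set. Moreover each such $\mathcal{P}$ is itself fixed by $\rho$ (every cell maps to itself), hence forms a singleton equivalence class. Therefore the number of equivalence classes of non-distinguishing partitions of $P_n$ with at most $k$ cells is $\sum_{j=1}^{k}\stirling{m}{j}$, and
\[\Psi_k(P_n)=\Pi_k(P_n)-\sum_{j=1}^{k}\stirling{m}{j}.\]

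The second step is to recognize $\sum_{j=1}^{k}\stirling{m}{j}$ as $\Pi_k(P_m)+\Xi_k(P_m)$. Assuming $m\ge 2$, let $\sigma$ be the reflection of $P_m$, so $\aut(P_m)=\{\id,\sigma\}$. Each partition of $V(P_m)$ with at most $k$ cells either is fixed by $\sigma$ as a partition --- in which case $\sigma$ preserves it, so it is a non-distinguishing partition and its equivalence class is a singleton --- or is not fixed by $\sigma$, in which case no nontrivial automorphism preserves it, so it is a distinguishing partition of $P_m$ and its equivalence class $\{\mathcal{Q},\sigma(\mathcal{Q})\}$ has size $2$. Counting classes, $\Pi_k(P_m)$ is the number of $\sigma$-fixed such partitions plus half the number of non-fixed ones, whereas $\Xi_k(P_m)$ is half the number of non-fixed ones; adding them gives the total number of partitions of the $m$-element set $V(P_m)$ into at most $k$ blocks, namely $\sum_{j=1}^{k}\stirling{m}{j}$. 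Substituting into the displayed identity yields the theorem.

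I do not expect a genuine obstacle; the work is bookkeeping. The one point that must be handled carefully throughout is keeping two notions apart: a \emph{distinguishing coloring partition} of $P_n$, whose defining failure is ``$\rho$ fixes every cell setwise'', versus an Ellingham--Schroeder \emph{distinguishing partition} of $P_m$, whose defining failure is ``$\sigma$ maps the set of cells onto itself''. The proof uses the first notion on $P_n$ and the second on $P_m$, and the stated formula is precisely the arithmetic that reconciles them through the common quantity $\sum_{j}\stirling{m}{j}$. The remaining details are routine: that the orbit-grouping correspondence preserves the number of cells; that a partition of $P_n$ all of whose cells are $\rho$-invariant is $\rho$-invariant as a partition; and the orbit-size bookkeeping under $\langle\sigma\rangle$ in the second step. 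The small case $n=2$ (where $m=1$ and $\aut(P_1)$ is trivial) is checked directly, and for every $n\ge 3$ one has $m\ge 2$ and the argument above applies verbatim.
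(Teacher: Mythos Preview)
Your proof is correct and follows essentially the same strategy as the paper: both arguments identify the equivalence classes of non-distinguishing coloring partitions of $P_n$ with partitions of the $m$-element set of $\rho$-orbits (the paper phrases this as restriction to $P_{\lceil n/2\rceil}$ and lifting), and both then use the $\sigma$-orbit dichotomy on partitions of $P_m$ to obtain $\Pi_k(P_m)+\Xi_k(P_m)$. Your presentation is slightly tidier in that you name the intermediate quantity $\sum_{j\le k}\stirling{m}{j}$ explicitly and are careful to distinguish the two senses of ``distinguishing partition'' used on $P_n$ versus $P_m$, but the substance of the argument is the same.
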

\begin{proof}
In order to count $\Psi_k(P_n)$, the number of non-equivalent distinguishing coloring partitions of $P_n$ with at most $k$ cells, we should subtract the number of non-distinguishing partitions from  the total number of non-equivalent partitions of $P_n$ with at most $k$ cells, i.e. $\Pi_k(P_n)$. 
Note that if a partition of $P_n$ is non-distinguishing, then its restriction to $P_{\lceil \frac{n}{2}\rceil}$, is either a non-distinguishing or a distinguishing partition. The number of the partitions of the former type is 
$ \Pi_k(P_{\lceil \frac{n}{2}\rceil}) - \Xi_k(P_{\lceil \frac{n}{2}\rceil})$,
while the number of the partitions of the latter type is 
$ \Xi_k(P_{\lceil \frac{n}{2}\rceil})$

Let $\alpha$ and $\beta$ be the non-trivial automorphisms of $P_n$ and $P_{\lceil \frac{n}{2}\rceil}$, respectively. If a partition $\pi=\{\pi_1,\ldots,\pi_r\}$ ($r\leq k$) of  $P_{\lceil \frac{n}{2}\rceil}$  is non-distinguishing, i.e. $\beta(\pi)=\pi$, then the lifted partition
\[\pi'=\{\pi_1\cup \alpha(\pi_1),\ldots,\pi_r\cup \alpha(\pi_r) \}\]
 of $P_n$ is  non-distinguishing. However, if a partition $\sigma=\{\sigma_1,\ldots,\sigma_r\}$ ($r\leq k$) of  $P_{\lceil \frac{n}{2}\rceil}$  is distinguishing, i.e. $\beta(\sigma)\neq\sigma$, then the two  lifted partitions 
 \[\sigma'=\{\sigma_1\cup \alpha(\sigma_1),\ldots,\sigma_r\cup \alpha(\sigma_r) \}\quad\text{and}\quad  \sigma''=\{\beta(\sigma_1)\cup \alpha(\beta(\sigma_1)),\ldots,\beta(\sigma_r)\cup \alpha(\beta(\sigma_r)) \}\]
  of $P_n$ are  non-equivalent  non-distinguishing partitions. On the other hand, by the definition, the number of partitions $\sigma$ is equal to $\Xi_k(P_{\lceil \frac{n}{2}\rceil})$. Therefore the total number of non-equivalent non-distinguishing partitions of $P_n$ equals to
  \[\Pi_k(P_{\lceil \frac{n}{2}\rceil})- \Xi_k(P_{\lceil \frac{n}{2}\rceil})+2 \Xi_k(P_{\lceil \frac{n}{2}\rceil})=\Pi_k(P_{\lceil \frac{n}{2}\rceil})+ \Xi_k(P_{\lceil \frac{n}{2}\rceil}),\]
which completes the proof.
\end{proof}

Theorem~\ref{xi} provides a nice connection among $\Psi_k(P_n)$, $\Pi_k(P_n)$ and $\Xi_k(P_n)$.  See \nameref{App_B} for tables of these indices.

\section{Distinguishing Lexicographic Products}\label{lexico}

In this section we provide an important application of one of the indices introduced in this paper, namely $\Phi_k(G)$. We start by recalling some preliminaries to the topic of lexicographic product of graphs.

Let $X$ be a graph. The \emph{$X$-join of $\{ Y_x \vert x\in V(X)\}$}, is the graph $Z$ with $$V(Z)= \{ (x,y) : x \in X, y\in Y_x\}$$ and $$E(Z) = \{ (x,y)(x',y'): xx'\in E(X)\textnormal{ or else }x = x'\textnormal{ and } yy'\in E(Y_{x})\}.$$
Whenever, for all $x\in X$, we have $Y_x \simeq Y$, for a fixed graph $Y$, the graph $Z$ is called the \emph{lexicographic product of $X$ and $Y$} and we write $Z=X\circ Y$.

We remind the reader that in \cite{saeed}, some bounds on the distinguishing number of the lexicographic product of graphs have been presented. In  this section, we calculate the distinguishing number of a lexicographic product in a natural case.

Automorphism groups of the $X$-join of $\{ Y\}_x$ and lexicographic products were studied by Hemminger \cite{hem} and Sabidussi \cite{Sab}.  Hemminger defined natural isomorphisms of an $X_1$-join graph onto $X_2$-join graph as follows: let $Z_i$ be an $X_i$-join of $\{ Y_{ix}\}_{x\in X_i}$, $i = 1, 2$. Then a graph isomorphism $\mu$ of $Z_1$ onto $Z_2$ is called \emph{natural} if for each $x_1 \in X_1$ there is an $x_2 \in X_2$ such that $\mu(Y_{1x_1 })= Y_{2x_2}$. Otherwise $\mu$ is called \textit{unnatural}. He then characterized all the $X$-join graphs whose automorphism groups consists of all their natural automorphisms \cite{hem}. When the automorphisms of an $X$-join graph (or a lexicographic product) are all natural ones, it is easier to break them, as we do it here.

\begin{theorem}\label{D_lexico}
	Suppose that $X\circ Y$ represents the lexicographic product of two graphs $X$ and $Y$. Then $D(X\circ Y)=k$ where $k$ is the least integer that $\Phi_k (Y)\geq D(X)$, provided that all the automorphisms of $X\circ Y$ are natural.
\end{theorem}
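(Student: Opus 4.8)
The plan is to set up the correspondence between colorings of $X\circ Y$ and "labeled" colorings of the base graph $X$, and then count carefully. Suppose all automorphisms of $X\circ Y$ are natural. A natural automorphism $\mu$ permutes the fibers $\{Y_x : x\in V(X)\}$, so it induces an automorphism $\bar\mu$ of $X$; moreover, within each fiber $Y_x$, $\mu$ restricts to an isomorphism $Y_x\to Y_{\bar\mu(x)}$, which (since all $Y_x\simeq Y$) is an automorphism of $Y$ up to the identification. So the automorphism group of $X\circ Y$ is (a subgroup of, and under Hemminger's hypothesis exactly) the wreath-type group $\mathrm{Aut}(Y)\wr \mathrm{Aut}(X)$, acting on $V(X\circ Y)=\bigsqcup_x V(Y)$ in the obvious way.

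First I would fix a number of colors $k$ and analyze when a $k$-coloring $c$ of $X\circ Y$ is distinguishing. Restricting $c$ to a fiber over $x$ gives a coloring $c_x$ of $Y$ (with at most $k$ colors). Two fibers over $x$ and $x'$ "look the same" precisely when $c_x$ and $c_{x'}$ are equivalent colorings of $Y$ in the sense defined in Section~\ref{dist-col}. Define an auxiliary vertex-coloring $C$ of $X$ by letting $C(x)$ be the equivalence class of $c_x$; the available "colors" for $C$ are the non-equivalent colorings of $Y$ with at most $k$ colors. Now I claim $c$ is a distinguishing coloring of $X\circ Y$ if and only if (i) each $c_x$ is a distinguishing coloring of $Y$, and (ii) $C$ is a distinguishing coloring of $X$. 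Indeed, a nontrivial natural automorphism $\mu$ preserving $c$ either fixes every fiber setwise — in which case it acts nontrivially inside some $Y_x$ by a color-preserving automorphism, contradicting (i) — or it moves some fiber, inducing a nontrivial $\bar\mu\in\mathrm{Aut}(X)$ which must preserve $C$ (because $c_x$ and $c_{\bar\mu(x)}$ must be equivalent for all $x$), contradicting (ii). Conversely, if (i) and (ii) hold, any $c$-preserving $\mu$ has $\bar\mu=\id$ by (ii), hence fixes each fiber, hence is the identity on each fiber by (i); so $\mu=\id$.

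Next I would extract the distinguishing number from this characterization. To build a distinguishing coloring we need enough non-equivalent distinguishing colorings of $Y$ available to serve as "colors" for a distinguishing coloring of $X$ — that is, we need at least $D(X)$ of them, but actually we need the palette of non-equivalent distinguishing colorings of $Y$ to have size at least $D(X)$, which is exactly the condition $\Phi_k(Y)\ge D(X)$ counting non-equivalent distinguishing colorings of $Y$ with at most $k$ colors. (A subtlety: $\Phi_k(Y)$ counts non-equivalent distinguishing colorings with \emph{at most} $k$ colors, and these are precisely the legitimate classes we may assign to vertices of $X$, since different classes may use different numbers of colors but all use $\le k$.) So if $k$ is the least integer with $\Phi_k(Y)\ge D(X)$, then: we can pick $D(X)$ non-equivalent distinguishing colorings of $Y$, assign them to $V(X)$ according to a distinguishing $D(X)$-coloring of $X$, and the resulting coloring of $X\circ Y$ uses at most $k$ colors and is distinguishing by the claim — so $D(X\circ Y)\le k$. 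For the lower bound, any distinguishing coloring of $X\circ Y$ with fewer than $k$ colors would, by the claim, give each fiber a distinguishing coloring of $Y$ with at most $k-1$ colors and would give $X$ a distinguishing coloring whose color classes are distinct equivalence classes of such colorings; this needs at least $D(X)$ distinct classes, so $\Phi_{k-1}(Y)\ge D(X)$, contradicting minimality of $k$. Hence $D(X\circ Y)=k$.

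The main obstacle I expect is getting the equivalence-of-colorings bookkeeping exactly right — in particular making sure that (a) the induced coloring $C$ of $X$ really ranges over a palette of size $\Phi_k(Y)$ and not something larger or smaller (one must check that two fiber-colorings give the same "$X$-color" iff they are $\mathrm{Aut}(Y)$-equivalent, and that equivalence classes using fewer than $k$ colors are still legitimately usable), and (b) that the wreath-product action is faithfully captured, i.e. that Hemminger's naturality hypothesis genuinely forces $\mathrm{Aut}(X\circ Y)=\mathrm{Aut}(Y)\wr\mathrm{Aut}(X)$ with no extra automorphisms, so that the "only if" direction of the claim is valid. Everything else is a direct translation between the two coloring problems.
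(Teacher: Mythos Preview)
Your proposal is correct and follows essentially the same approach as the paper: characterize distinguishing colorings of $X\circ Y$ via the requirement that each fiber coloring be distinguishing and that the induced ``coloring'' of $X$ by equivalence classes be distinguishing, then translate this into the inequality $\Phi_k(Y)\ge D(X)$. In fact your argument is more complete than the paper's, which sketches only the lower bound and leaves the construction for the upper bound implicit; your explicit verification of both directions of the claim and both inequalities $D(X\circ Y)\le k$ and $D(X\circ Y)\ge k$ fills in what the paper omits.
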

\begin{proof}
		When all the automorphisms of $G= X\circ Y$ are natural ones, any distinguishing coloring of  $G$ has to break symmetries inside $Y_x$ for all $x\in V(X)$ and for each automorphism $\alpha$ of $X$ that maps $u$ on $v$ ($u\neq v$), the colorings of $Y_u$ and $Y_v$ must be non-equivalent. Therefore, whenever $G$ has a distinguishing coloring with $k$ colors, we must have $\Phi_k (Y)\geq D(X)$.
\end{proof}

By a similar argument, we find an upper bound for the distinguishing number of the $X$-join of a set of graphs $\{Y_x \}_{x\in X}$. To do so, we need some  notation in our argument. Let $f$ be a distinguishing coloring of $X$ with $D(X)$ colors. For $x \in X$, let $$C(x) =\{w \in X : f(x)\neq f(w)\textnormal{ and } Y_x \simeq Y_w \}\cup \{ x \}$$ and $$D_x = \mathrm{min}\{ k : \Phi_k (Y_x) \geq \vert C(x)\vert \}.$$ For each $x\in X$, we obviously have $\vert C(x)\vert \geq 1$ and $D_x \geq D(Y_{x})$. Moreover, put $$d_f = \mathrm{max}\{ D_x : x\in X \}.$$

\begin{theorem}\label{x-join}
	Let $Z$ be the $X$-join of $\{Y_x \}_{x\in X}$ whose automorphism group is the  set of natural automorphisms. Let $S$ be the set of all (non-equivalent) distinguishing colorings of $X$ with $D(X)$ colors. Then $D(Z)\leq \mathrm{min}\{ d_f : f\in S \}$.
\end{theorem}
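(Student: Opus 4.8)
The plan is to construct, for each distinguishing coloring $f$ of $X$ with $D(X)$ colors, an explicit distinguishing coloring of the $X$-join $Z$ using exactly $d_f$ colors, and then take the best such $f$. First I would fix $f\in S$ and set $d=d_f$. Since the automorphism group of $Z$ consists only of natural automorphisms, every automorphism $\mu$ of $Z$ induces an automorphism $\bar\mu$ of the quotient $X$ (acting by $\mu(Y_x)=Y_{\bar\mu(x)}$), and moreover $\bar\mu$ can only send $Y_x$ to some $Y_w$ with $Y_w\simeq Y_x$; also $\mu$ restricted to $Y_x$ is an isomorphism onto $Y_{\bar\mu(x)}$. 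So to kill all of $\mathrm{Aut}(Z)$ it suffices to (i) break the symmetries inside each fiber $Y_x$, and (ii) ensure that no two fibers that could be swapped by an automorphism carry equivalent colorings.

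Next I would do the coloring fiber-by-fiber. For each $x\in X$, the quantity $D_x=\min\{k:\Phi_k(Y_x)\ge |C(x)|\}$ is by definition the least number of colors with which $Y_x$ admits at least $|C(x)|$ pairwise non-equivalent distinguishing colorings; and $d=d_f\ge D_x$, so $Y_x$ has at least $|C(x)|$ pairwise non-equivalent distinguishing $d$-colorings as well (using at most $d$ colors — here I use $\Phi$, the ``at most'' version, exactly as the definition of $D_x$ does). Now process the vertices of $X$ one $f$-color class at a time; within the set $C(x)$ (which by definition collects, together with $x$, all $w$ with $Y_w\simeq Y_x$ but $f(w)\ne f(x)$), assign to the fibers $\{Y_w:w\in C(x)\}$ pairwise non-equivalent distinguishing $d$-colorings — possible since $|C(x)|\le\Phi_d(Y_x)$ and all these fibers are isomorphic to $Y_x$. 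One must check this fiber-assignment is globally consistent, i.e. that every fiber gets assigned a color exactly once; this follows because $w\in C(x)\iff x\in C(w)$ when $Y_x\simeq Y_w$, so the sets $C(x)$ partition each isomorphism class of fibers refined by $f$-color — actually they need not be disjoint, so I would instead phrase it as: define the coloring greedily, and note that whenever $Y_x\simeq Y_w$, the fibers $Y_x$ and $Y_w$ receive equivalent colorings only if $f(x)=f(w)$.

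Finally I would verify this coloring of $Z$ is distinguishing. Let $\mu\in\mathrm{Aut}(Z)$ preserve it; it is natural, inducing $\bar\mu\in\mathrm{Aut}(X)$. If $\bar\mu(x)=w$ with $w\ne x$, then $Y_x\simeq Y_w$ and $\mu$ maps the coloring of $Y_x$ to that of $Y_w$, so these two fiber-colorings are equivalent; by construction that forces $f(x)=f(w)$. Hence $\bar\mu$ is a color-preserving automorphism of $(X,f)$, and since $f$ is distinguishing, $\bar\mu=\mathrm{id}$, so $\mu$ maps each $Y_x$ to itself. But $\mu|_{Y_x}$ is then a color-preserving automorphism of the distinguishingly-colored fiber $Y_x$, hence the identity. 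So $\mu=\mathrm{id}$ and the $d_f$-coloring is distinguishing, giving $D(Z)\le d_f$; minimizing over $f\in S$ finishes the proof. The main obstacle I anticipate is the bookkeeping in the fiber-assignment step — making precise that the greedy choice over the (possibly overlapping) sets $C(x)$ can always be completed, and that ``$\Phi$ versus $\varphi$'' is handled correctly (we genuinely want colorings with \emph{at most} $d$ colors, so that fibers colored with fewer colors are still allowed). Everything else is a direct unwinding of naturalness of the automorphisms.
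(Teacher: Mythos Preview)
Your proposal is correct and follows essentially the same approach as the paper: fix a distinguishing coloring $f$ of $X$, color each fiber $Y_x$ distinguishingly with at most $d_f$ colors so that isomorphic fibers with different $f$-colors receive non-equivalent colorings, and then use naturalness to push any color-preserving automorphism down to an $f$-preserving automorphism of $X$. The paper's proof is in fact a terse version of exactly this argument and does not spell out the bookkeeping you flagged; your suggested fix (assign one coloring per $f$-color class within each isomorphism class of fibers, which requires only as many non-equivalent colorings as there are $f$-colors present in that class, a number bounded by $|C(x)|$) is the right way to resolve it.
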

\begin{proof}
	Let $f\in S$  and suppose that for each $x\in X$, we colored $Y_x$ distinguishingly.  For each $w \in C(x)\setminus \{x\}$, if $Y_x$ and $Y_w$ are colored non-equivalently, then we can guarantee that the resulting coloring of $Z$ is distinguishing. For such a coloring we do not need more than $d_f$ colors. Consequently, $D(Z) \leq d_f$.
\end{proof}

We must point this out to the reader that sometimes it is possible that $D(Z)$ becomes strictly less than $\mathrm{min}\{ d_f : f\in S \}$. For example, suppose that $X$ is a large cycle on $\{ v_1 , v_2 , \ldots , v_n \}$ as its set of vertices. Let $Y_1$ be an asymmetric tree on $m_1 \geq 7$ vertices and $Y_3$ be another asymmetric tree on $m_3 \geq 7$ vertices, and $Y_1 \not\simeq Y_3$. Suppose also that for $i=2,4,5,\ldots, n$, all $Y_i$s are isomorphic to $K_1$. Then, the $X$-join of $\{ Y_i \}_{i=1}^{n}$ is an asymmetric graph (whose distinguishing number equals to 1), while $\mathrm{min}\{ d_f : f\in S \} = 2$. However, the stated bound in Theorem \ref{x-join} is the best that can be  generally found about $D(Z)$, because it is attainable by the lexicographic product of two graphs (e.~g. $C_n \circ K_1$).

\section{Conclusion}\label{conclusion}

We have seen in Section \ref{lexico} that counting the number of non-equivalent distinguishing colorings,~$\Phi$, has an application in finding the distinguishing number of lexicographic products. Moreover, other indices have shown to have  deep interactions with each other and also with~$\Phi$. It should be noted that calculating these indices are not always easy. Even when the automorphism group is very small and simple, counting non-equivalent distinguishing colorings or distinguishing (coloring) partitions faces with several calculation obstacles. 

In the appendices, there are tables of the indices introduced in this paper for small paths and cycles. Considering these tables suggests that most of these indices are new generators of integer sequences.

To make calculations more comfortable, in some special cases, we introduced the notion of  distinguishing threshold in Section \ref{threshold}, which enables us to reduce the required calculations in a computer algebra system to an acceptable level. However, this index has the importance to be considered separately, as it is a dual to the distinguishing number; \textit{there is a distinguishing coloring with a number of colors greater than or equal to the distinguishing number while there is a non-distinguishing coloring with a number of colors less than the distinguishing threshold}. Among many good questions, one might consider this index for some families of graphs, or, study the distinguishing threshold for the Cartesian product.

We finally point out that one might consider infinite graphs or some notions other than distinguishing coloring  for defining the parameters that we have introduced in this paper; e.~g. distinguishing edge coloring, distinguishing total coloring, etc.

\section*{Acknowledgment}
We owe a great debt to professor Wilfried Imrich, who proposed several problems, which led to this paper, during his visit to Shiraz University.

\newpage

\section*{Appendix A}\label{App_A}

\begin{center} \textbf{Tables of $\Phi_k$ and $\varphi_k$ for small paths and cycles} \end{center}

\begin{table}[h]
	\small\hspace{-1cm}
	\label{tab_for_Phi_k_of_P_n}
	\begin{tabular}{|l||l|l|l|l|l|l|l|l|l|} \hline
	\backslashbox[1mm]{$n$}{$k$}&2&3&4&5&6&7&8&9&10\\ \hline\hline
	2&1&3&6&10&15&21&28&36&45\\ \hline
	3&2&9&24&50&90&147&224&324&450\\  \hline
	4&6&36&120&300&630&1176&2016&3240&4950\\  \hline
	5&12&108&480&1500&3780&8232&16128&29160&49500\\  \hline
	6&28 &351&2016&7750&23220&58653&130816&265356&499500\\ \hline
	7&56&1053&8064&38750&139320&410571&1046528&2388204&4995000\\  \hline
	8&120&3240&32640&195000&839160&2881200&8386560&21520080&49995000\\ \hline 9&240&9720&130560&975000&5034960&20168400&67092480&193680720&499950000\\ \hline 10&496&29403&523776&4881250&30229200&141229221&536854528&1743362676&4999950000\\\hline
	\end {tabular}
	
	\medskip
	\caption{Some values of $\Phi_k(P_n)$ (Sequence A293500)}
\end{table}

\vfill

\begin{table}[h]
	\centering\small
	\label{tab_for_Phi_k_of_C_n}
	\begin {tabular}{|l||l|l|l|l|l|l|l|l|l|} \hline
	\backslashbox[10mm]{$n$}{$k$}&2&3&4&5&6&7&8&9&10\\ \hline\hline
	3&    0   & 1 &       4 &    10&                20&         35&       56&     84&    120\\  \hline
	4&    0   &   3  &     15 &      45  &          105&        210&       378&      630&990\\  \hline
	5&     0   &  12   &      72     & 252  &        672&         1512&      3024&     5544&9504\\  \hline
	6&     1   &   37   &      266   &   1120  &      3515  &      9121&      20692&      42456&80565\\ \hline
	7&     2 &     117 &      1044     & 5270  &     19350  &       57627  &     147752  &     338364 &709290\\  \hline
	8&     6  &    333    &     3788    & 23475   &      102690   &      355446  &      1039248   &     2673810   &6222150\\ \hline 
	9&    14 &    975  &      14056    &  106950   &      555990   &       2233469   &    7440160  &     21493836  &55505550\\ \hline 
	10&   30  &   2712  &      51132    &   483504  &    3009426    &      14089488  &     53611992   &    174189024   &  499720518  \\\hline
	\end {tabular}
	
	\medskip
	\caption{Some values of $\Phi_k(C_n)$ (Sequence A309528)}
\end{table}

\pagebreak

\begin{table}[h]
	\centering\small
	\label{tab_for_phi_k_of_P_n}
	\begin {tabular}{|l||l|l|l|l|l|l|l|l|l|} \hline
	\backslashbox{$n$}{$k$}&2&3&4&5&6&7&8&9&10\\ \hline\hline
	2&1&0&0&0&0&0&0&0&0\\ \hline
	3&2&3&0&0&0&0&0&0&0\\  \hline
	4&6&18&12&0&0&0&0&0&0\\  \hline
	5&12&72&120&60&0&0&0&0&0\\  \hline
	6&28 &267&780&900&360&0&0&0&0\\ \hline
	7&56&885&4188&8400&7560&2520&0&0&0\\  \hline
	8&120&2880&20400&63000&95760&70560&20160&0&0\\ \hline 9&240&9000&93120&417000&952560&1164240&725760&181440&0\\ \hline 10&496&27915&409140&2551440&8217720&14817600&15120000&8164800&1814400\\\hline
	\end {tabular}
	
	\medskip
	\caption{Some values of $\varphi_k(P_n)$ (Sequence A309785)}
\end{table}

\vfill

\begin{table}[h]
	\centering\small
	\label{tab_for_phi_k_of_C_n}
	\begin {tabular}{|l||l|l|l|l|l|l|l|l|l|} \hline
	\backslashbox{$n$}{$k$}&2&3&4&5&6&7&8&9&10\\ \hline\hline
	3&0   &1 &0&0&0&0&0&0&0\\  \hline
	4&0   & 3  &  3 &0&0&0&0&0&0\\  \hline
	5&0   &12   &24   & 12  &0&0&0&0&0\\  \hline
	6&1   &     34 & 124   &   150  & 60     &0&0&0&0\\ \hline
	7&  2 &     111 &  588     & 1200  &   1080  & 360  &0&0&0\\  \hline
	8& 6  & 315    & 2484    & 7845   & 11970   &8820  & 2520   &0&0\\ \hline 
	9&  14 & 933  &   10240    &  46280   &   105840   &  129360   &  80640  & 20160  &0\\ \hline 
	10& 30  &   2622  & 40464    &   254664  & 821592    & 1481760  & 1512000   & 816480    &  181440 \\\hline
	\end {tabular}
	
	\medskip
	\caption{Some values of $\varphi_k(C_n)$ (Sequence A309651)}
\end{table}

\pagebreak
\section*{Appendix B}\label{App_B}

\begin{center}
\textbf{Tables of $\Psi_k$, $\psi_k$, $\Pi_k$, $\pi_k$, $\Xi_k$ and $\xi_k$ for small paths and cycles}
\end{center}

\begin{table}[h]
	\centering\small
	\label{tab_for_big_Psi_k_of_P_n}
	\begin {tabular}{|l||l|l|l|l|l|l|l|l|l|} \hline
	\backslashbox{$n$}{$k$}&2&3&4&5&6&7&8&9&10\\ \hline\hline
	2&1&1&1&1&1&1&1&1&1\\ \hline
	3&1&2&2&2&2&2&2&2&2\\  \hline
	4&4& 8 &9&9&9&9&9&9&9\\  \hline
	5&6&20&26&27&27&27&27&27&27\\  \hline
	6&16 &     65 & 102 & 111 &112&112&112&112&112\\ \hline
	7& 28   & 182   & 364   & 440 & 452   &453&453&453&453\\  \hline
	8& 64  & 560  & 1436   & 1978   &  2120   & 2136  &2137&2137&2137\\ \hline 
	9&  120 &  1640   &    5560 &  9082  & 10428    &   10670 &    10690 &10691&10691\\ \hline 
	10& 256  & 4961    & 22136    &   43528   &  55039   &   58019    &   58409   & 58434   &58435\\\hline
	\end {tabular}
	
	\medskip
	\caption{Some values of $\Psi_k(P_n)$ (Sequence A309635)}
\end{table}

\vfill

\begin{table}[h]
	\centering\small
	\label{tab_for_big_Psi_k_of_C_n}
	\begin {tabular}{|l||l|l|l|l|l|l|l|l|l|} \hline
	\backslashbox{$n$}{$k$}&2&3&4&5&6&7&8&9&10\\ \hline\hline
	3&0&1&1&1&1&1&1&1&1\\  \hline
	4&0& 1 &2&2&2&2&2&2&2\\  \hline
	5&0&4&6&7&7&7&7&7&7\\  \hline
	6&1 &   9 & 19 & 22 &23&23&23&23&23\\ \hline
	7&  1   &  26    &  58  & 74   &  77  &78&78&78&78\\  \hline
	8& 4 &  66 &  195  &    279  &  306 &310&311&311&311\\ \hline 
	9& 7  &  183   &  651    & 1084   & 1255    &   1292  &  1296   &1297&1297\\ \hline 
	10& 18 &  488   & 2294    &     4554 &  5803   &   6141   & 6195     & 6200    &6201 \\\hline
	\end {tabular}
	
	\medskip
	\caption{Some values of $\Psi_k(C_n)$ (Sequence A309785)}
\end{table}

\begin{table}[h]
	\centering\small
	\label{tab_for_psi_k_of_P_n}
	\begin {tabular}{|l||l|l|l|l|l|l|l|l|l|} \hline
	\backslashbox{$n$}{$k$}&2&3&4&5&6&7&8&9&10\\ \hline\hline
	2&1&0&0&0&0&0&0&0&0\\ \hline
	3&1&1&0&0&0&0&0&0&0\\  \hline
	4&4& 4 &1&0&0&0&0&0&0\\  \hline
	5&6&14&6&1&0&0&0&0&0\\  \hline
	6&16 &     49 &  37 & 9 &1&0&0&0&0\\ \hline
	7& 28   & 154   & 182   & 76  & 12   &1&0&0&0\\  \hline
	8& 64  & 496  & 876   & 542   &  142   & 16  &1&0&0\\ \hline 
	9&  120 &  1520   &    3920 &  3522  & 1346    &   242 &    20 &1&0\\ \hline 
	10& 256  & 4705    & 17175    &   21392   &  11511   &  2980     &   390   & 25  &1\\\hline
	\end {tabular}
	
	\medskip
	\caption{Some values of $\psi_k(P_n)$ (Sequence A309748)}
\end{table}

\vfill

\begin{table}[h]
	\centering\small
	\label{tab_for_psi_k_of_C_n}
	\begin {tabular}{|l||l|l|l|l|l|l|l|l|l|} \hline
	\backslashbox{$n$}{$k$}&2&3&4&5&6&7&8&9&10\\ \hline\hline
	3&0&1&0&0&0&0&0&0&0\\  \hline
	4&0& 1 &1&0&0&0&0&0&0\\  \hline
	5&0&4&2&1&0&0&0&0&0\\  \hline
	6&1 &   8 & 10 & 3 &1&0&0&0&0\\ \hline
	7&  1   &  25    &  32   & 16   &  3  &1&0&0&0\\  \hline
	8& 4 &  62 &  129  & 84  &   27  &  4 &1&0&0\\ \hline 
	9& 7  &  176   &  468    & 433   & 171    &   37  &  4   &1&0\\ \hline 
	10& 18 &  470   & 1806    &     2260 &  1248   &  338     &   54   &  5  &1\\\hline
	\end {tabular}
	
	\medskip
	\caption{Some values of $\psi_k(C_n)$ (Sequence A309784)}
\end{table}

\newpage

\begin{table}[h]
	\centering\small
	\label{tab_for_Pi_k_of_P_n}
	\begin {tabular}{|l||l|l|l|l|l|l|l|l|l|l|} \hline
	\backslashbox{$n$}{$k$}&1&2&3&4&5&6&7&8&9&10\\ \hline\hline
	1&1&1&1&1&1&1&1&1&1&1\\ \hline
	2&1& 2  &2&2&2&2&2&2&2&2\\ \hline
	3&1& 3 &  4 &4&4&4&4&4&4&4\\  \hline
	4&1&6  &10    & 11  &11&11&11&11&11&11\\  \hline
	5&1&10  &  25 &  31 &  32 &32&32&32&32&32\\  \hline
	6&1& 20  &    70   &  107 &  116 & 117 &117&117&117&117\\ \hline
	7&1&   36  &   196  &   379  &455   & 467    & 468    &468&468&468\\  \hline
	8&1&  72  &  574  & 1451    & 1993    &  2135    &  2151  & 2152  & 2152 & 2152 \\ \hline 
	9&1& 136   & 1681     &  5611    &  9134   &  10480    & 10722   &   10742   & 10743  & 10743\\ \hline 
	10&1& 272   &   5002   &    22187  &  43580     &   55091  &   58071    &   58461   & 58486   & 58487\\\hline
	\end {tabular}
	
	\medskip
	\caption{Some values of $\Pi_k(P_n)$ (Sequence A320750)}
\end{table}

\begin{table}[h]
	\centering\small
	\label{tab_for_Pi_k_of_C_n}
	\begin {tabular}{|l||l|l|l|l|l|l|l|l|l|l|} \hline
	\backslashbox{$n$}{$k$}& 1&2&3&4&5&6&7&8&9&10\\ \hline\hline
	3& 1& 2 &  3 &3&3&3&3&3&3&3\\  \hline
	4& 1& 4  & 6    & 7  &7&7&7&7&7&7\\  \hline
	5& 1&4  &  9 &  11 &  12 &12&12&12&12&12\\  \hline
	6& 1& 8  &  22  &  33 &  36 & 37 &37&37&37&37\\ \hline
	7& 1&   9  &   40  &   73  &89   & 92    & 93    &93&93&93\\  \hline
	8& 1&  18  &  100 & 237    & 322  & 349  &  353  & 354  &354 & 354 \\ \hline 
	9& 1& 23   & 225     &  703    &  1137   &  1308    & 1345   &  1349   & 1350  & 1350\\ \hline 
	10& 1& 44   &   582   &    2433 &  4704  & 5953  &  6291   & 6345  & 6350  &6351 \\\hline
	\end {tabular}
	
	\medskip
	\caption{Some values of $\Pi_k(C_n)$ (Sequence A320748)}
\end{table}

\begin{table}[h]
	\centering\small
	\label{tab_for_pi_k_of_P_n}
	\begin {tabular}{|l||l|l|l|l|l|l|l|l|l|l|} \hline
	\backslashbox{$n$}{$k$}& 1&2&3&4&5&6&7&8&9&10\\ \hline\hline
	1&1&0&0&0&0&0&0&0&0&0\\ \hline
	2& 1& 1  &0&0&0&0&0&0&0&0\\ \hline
	3& 1& 2 &  1 &0&0&0&0&0&0&0\\  \hline
	4& 1&5  &4    & 1  &0&0&0&0&0&0\\  \hline
	5& 1&9  &  15 &  6 &  1 &0&0&0&0&0\\  \hline
	6& 1& 19  &    50   &  37 &  9 & 1 &0&0&0&0\\ \hline
	7& 1&   35  &   160  &   183  &76   & 12    & 1    &0&0&0\\  \hline
	8& 1&  71  &  502  & 877    & 542    &  142    &  16  & 1  & 0 & 0 \\ \hline 
	9& 1& 135   & 1545     &  3930    &  3523   &  1346    & 242   &   20   & 1  & 0\\ \hline 
	10& 1& 271   &   4730   &    17185  &  21393     &   11511  &   2980    &   390   & 25   &1 \\\hline
	\end {tabular}
	
	\medskip
	\caption{Some values of $\pi_k(P_n)$ (Sequence A284949)}
\end{table}

\begin{table}[h]
	\centering\small
	\label{tab_for_pi_k_of_C_n}
	\begin {tabular}{|l||l|l|l|l|l|l|l|l|l|l|} \hline
	\backslashbox{$n$}{$k$}& 1&2&3&4&5&6&7&8&9&10\\ \hline\hline
	3& 1& 1 &  1 &0&0&0&0&0&0&0\\  \hline
	4& 1& 3  & 2    & 1  &0&0&0&0&0&0\\  \hline
	5& 1&3  &  5 &  2 &  1 &0&0&0&0&0\\  \hline
	6& 1& 7  &    14   &  11 &  3 & 1 &0&0&0&0\\ \hline
	7& 1&   8  &   31  &   33  &16   & 3    & 1    &0&0&0\\  \hline
	8& 1&  17  &  82  & 137    & 85    &  27    &  4  & 1  & 0 & 0 \\ \hline 
	9& 1& 22   & 202     &  478    &  434   &  171    & 37   &   4   & 1  & 0\\ \hline 
	10& 1& 43   &   538   &    1851  &  2271  &  1249  &  338   & 54  & 5  &1 \\\hline
	\end {tabular}
	
	\medskip
	\caption{Some values of $\pi_k(C_n)$ (Sequence A152176)}
\end{table}

\newpage

\begin{table}[h]
	\centering\small
	\label{tab_for_Xi_k_of_P_n}
	\begin {tabular}{|l||l|l|l|l|l|l|l|l|l|l|} \hline
	\backslashbox{$n$}{$k$}&1&2&3&4&5&6&7&8&9&10\\ \hline\hline
	1&1&1 & 1  &1 & 1&1 &1 &1& 1&1 \\  \hline
	2& 0&0  &0&0&0&0&0&0&0&0\\ \hline
	3& 0&1 & 1  &1 & 1&1 &1 &1& 1&1 \\  \hline
	4& 0&2 & 4   & 4  &4 & 4& 4&4 & 4& 4\\  \hline
	5& 0&6 &  16 &  20 & 20  & 20&20 &20 & 20&20 \\  \hline
	6& 0& 12 &   52    &  80 &  86 & 86 & 86& 86& 86& 86\\ \hline
	7& 0& 28   &  169   &  336   &  400 &  409   &  409  &409 & 409 &409 \\  \hline
	8& 0&56   & 520   &  1344   &  1852   &  1976    &  1988  &1988  &1988 & 1988\\ \hline 
	9& 0&120   &   1600   &  5440    & 8868    & 10168     & 10388    &   10404   &10404   &10404 \\ \hline 
	10&0& 240   &   4840   &   21760   &   42892    &   54208   &  57108     & 57468    & 57488   & 57488 \\\hline
	\end {tabular}
	
	\medskip
	\caption{Some values of $\Xi_k(P_n)$ (Sequence A320751)}
\end{table}

\begin{table}[h]
	\centering\small
	\label{tab_for_Xi_k_of_C_n}
	\begin {tabular}{|l||l|l|l|l|l|l|l|l|l|} \hline
	\backslashbox{$n$}{$k$}&2&3&4&5&6&7&8&9&10\\ \hline\hline
	3& 0 & 0  &0 & 0&0 &0 &0& 0&0 \\  \hline
	4& 0 & 0  &0 & 0&0 &0 &0& 0&0 \\  \hline
	5& 0 & 0  &0 & 0&0 &0 &0& 0&0  \\  \hline
	6&  0 &   4    &  6 & 6 & 6 & 6& 6& 6& 6\\ \hline
	7& 1   &  13   &  30 & 34& 34   &  34  &34 &34 &34 \\  \hline
	8& 2   &  45   &  127  &  176 &185   &  185  &185  &185 & 185\\ \hline 
	9&  7   &  144    & 532    & 871     & 996 &1011   &   1011   &1011   &1011 \\ \hline 
	10&    12   &   416   &   1988    &   3982   &  5026     & 5280  &5304  & 5304   &5304\\\hline
	\end {tabular}
	
	\medskip
	\caption{Some values of $\Xi_k(C_n)$ (Sequence A324803)}
\end{table}

\begin{table}[h]
	\centering\small
	\label{tab_for_xi_k_of_P_n}
	\begin {tabular}{|l||l|l|l|l|l|l|l|l|l|} \hline
	\backslashbox{$n$}{$k$}&2&3&4&5&6&7&8&9&10\\ \hline\hline
	2& 0  &0&0&0&0&0&0&0&0\\ \hline
	3& 1 & 0  &0 & 0&0 &0 &0& 0&0 \\  \hline
	4& 2 & 2   & 0  &0 & 0& 0&0 & 0& 0\\  \hline
	5& 6 &  10 &  4 & 0  & 0&0 &0 & 0&0 \\  \hline
	6&  12 &   40    &  28 &  6 & 0 & 0& 0& 0& 0\\ \hline
	7&  28   &  141   &  167   &  64 &  9   &  0  &0 &0 &0 \\  \hline
	8& 56   & 464   &  824   &  508   &  124    &  12  &0  &0 & 0\\ \hline 
	9& 120   &   1480   &  3840    & 3428    & 1300     & 220    &   16   &0   &0 \\ \hline 
	10& 240   &   4600   &   16920   &   21132    &   11316   &  2900     & 360    & 20 &0\\\hline
	\end {tabular}
	
	\medskip
	\caption{Some values of $\xi_k(P_n)$ (Sequence A320525)}
\end{table}

\begin{table}[h]
	\centering\small
	\label{tab_for_xi_k_of_C_n}
	\begin {tabular}{|l||l|l|l|l|l|l|l|l|l|} \hline
	\backslashbox{$n$}{$k$}&2&3&4&5&6&7&8&9&10\\ \hline\hline
	3& 0 & 0  &0 & 0&0 &0 &0& 0&0 \\  \hline
	4& 0 & 0   & 0  &0 & 0& 0&0 & 0& 0\\  \hline
	5& 0 &  0 &  0 & 0  & 0&0 &0 & 0&0 \\  \hline
	6&  0 &   4    &  2 &  0 & 0 & 0& 0& 0& 0\\ \hline
	7&  1   &  12   &  17  &  4 &  0   &  0  &0 &0 &0 \\  \hline
	8& 2   & 43   &  82   &  49   &  9    &  0  &0  &0 & 0\\ \hline 
	9& 7   &   137   &  388    & 339    & 125    & 15    &   0   &0   &0 \\ \hline 
	10& 12 & 404  &   1572   &   1994   &   1044    &  254  &   24  &  0  & 0\\\hline
	\end {tabular}
	
	\medskip
	\caption{Some values of $\xi_k(C_n)$ (Sequence A324802)}
\end{table}

\end{document}